\newcommand{\grs}[1]{\raisebox{-.3cm}{\includegraphics[height=.75cm]{YB#1.pdf}}}
\newcommand{\gra}[1]{\raisebox{-.4cm}{\includegraphics[height=1cm]{SG#1.pdf}}}
\newcommand{\graa}[1]{\raisebox{-.6cm}{\includegraphics[height=1.5cm]{SG#1.pdf}}}
\newcommand{\grb}[1]{\raisebox{-.8cm}{\includegraphics[height=2cm]{SG#1.pdf}}}
\begin{document} 

\newtheorem{theorem}{Theorem}[section]
\newtheorem{lemma}[theorem]{Lemma}

\theoremstyle{definition}
\newtheorem{definition}[theorem]{Definition}
\newtheorem{example}[theorem]{Example}
\newtheorem{xca}[theorem]{Exercise}
\newtheorem{notation}[theorem]{Notation~}
\newtheorem{proposition}[theorem]{Proposition~}
\newtheorem{corollary}[theorem]{Corollary~}

\theoremstyle{remark}
\newtheorem{remark}[theorem]{Remark}

\newtheorem*{ac}{Acknowledgement}

\newpage
\title{Singly generated planar algebras of small dimension, Part III}
\author{Dietmar Bisch, Vaughan F. R. Jones, Zhengwei Liu}
\date{\today}

\begin{abstract}
The first two authors classified subfactor planar algebra generated by a non-trivial 2-box subject to the condition that the dimension of 3-boxes is at most 12 in Part I; 13 in Part II of this series.
They are the group planar algebra for $\mathbb{Z}_3$, the Fuss-Catalan planar algebra ; and the group/subgroup planar algebra for $\mathbb{Z}_2\subset \mathbb{Z}_5\rtimes \mathbb{Z}_2$.
In the present paper, we extend the classification to 14 dimensional 3-boxes. They are all BMW.
Precisely it contains a depth 3 one from quantum $SO(3)$, and a one-parameter family from quantum $Sp(4)$.
\end{abstract}

\maketitle

\section{Introduction}
Interest in subfactors began with \cite{Jon83} where the indices of subfactors of type II$_1$ were shown to be in the set $$\{4\cos^2(\frac{\pi}{n}), n=3,4,\cdots \}\cup [4,\infty].$$
This suggested the index as a complexity measure for subfactors so the simplest subfactors would be those of index less than 4 and then those of index between 4 and 5. Indeed subfactors of index at most 4 were classified \cite{Ocn88,GHJ,Pop94,Izu91}. This approach has been extremely successful in work of Haagerup \cite{Haa94} and others \cite{AsaHaa,Bis98,Izu91,SunVij,BMPS}. Recently the classification has been extended up to index 5, and even beyond. See \cite{ind50,ind51,ind52,ind53,ind54}.

Such classifications would not be possible without the reduction of the subfactor problem to an essentially combinatorial one.
The invariant classifying subfactors is known as the ``standard invariant''  and was axiomatized as
Ocneanu's paragroups \cite{Ocn88} and Popa's $\lambda$-lattices \cite{Pop95}. Later on Jones gave an axiomatization entirely in
terms of planar diagrams in \cite{JonPA}.  The structure is called a planar algebra.  The so-called ``principal graph" is common to these axiomatizations and is to be
thought of as the graph of tensoring irreducible bimodules by $M$ as an $N-M$-bimodule. To say that a subfactor is of finite depth is
to say that its principal graph is finite.
A deep theorem of Popa  \cite{Pop90} shows that the standard invariant
is a complete invariant of subfactors of finite index and finite depth of the hyperfinite II$_1$ factor.

The planar algebra perspective suggests a completely different measure of the complexity of a subfactor.
Planar algebras have presentations in terms of generators and relations so
it is natural to say that the simplest subfactors are those whose planar algebras are
generated by the fewest elements satisfying the simplest relations. The index may be arbitrarily large.
The simplest planar algebra of all is the one with no generators nor relations (!!), which is a sub planar algebra of
any planar algebra, known as the Temperley-Lieb algebra.

The next most complicated planar algebras after Temperley-Lieb should be those generated
by a single element. See \cite{Wen90,MPSD2n,Pet10,BMPS} for examples.

A planar algebra $\mathscr{S}$ consists of vector spaces $\mathscr{S}_{n,\pm}$ for ${n\in \mathbb{N}\cup \{0\}}$.
An element in $\mathscr{S}_{n,\pm}$ is called an $n$-box. (In other interpretations it is a morphism between bimodules.)
Planar algebras generated by a 1-box were completely analyzed by the second author in \cite{JonPA}.
A classification of planar algebras generated by a single 2-box is surely impossible.
But if we are given restrictions on the dimensions of the $\mathscr{S}_{n,\pm}$ many linear combinations of
planar algebra elements must vanish so that we can write down a lot of relations on the generator.
If we are lucky, dimension restrictions will be enough to limit the possibilities for the entire planar algebra. In practice it seems that
if the relations are powerful enough to calculate the value of a labelled planar diagram with no boundary points, then we can
calculate the entire structure just from these relations.
The BMW planar algebras \cite{BirWen,Mur87} are generated by an element satisfying the Yang-Baxter equation or equivalently the
type III Reidemeister move. The evaluation of labeled diagrams with no boundary points is known as the Kauffman polynomial, see \cite{Kau90}. Motivated by this family, the classification for $\dim(\mathscr{S}_{3,\pm})\leq 15$ appears to be possible (apart from a non-generic situation).

The first two authors classified subfactor planar algebras generated by a 2-box for $\dim(\mathscr{S}_{3,\pm})\leq 12$ in \cite{BisJon97}, and for $\dim(\mathscr{S}_{3,\pm})=13$ in \cite{BisJon02}. They are the group  planar algebra for $\mathbb{Z}_3$, the Fuss-Catalan planar algebra ; and the group/subgroup planar algebra for  $\mathbb{Z}_2\subset \mathbb{Z}_5\rtimes \mathbb{Z}_2$.
In this paper we extend the classification to dimension 14 as follows:
\begin{theorem}
Suppose $\mathscr{S}$ is a subfactor planar algebra generated by a 2-box subject to the condition $\dim(\mathscr{S}_{3,\pm})=14$. Then $\mathscr{S}$ is BMW.
More precisely either it is the depth 3 one from quantum $SO(3)$, or it arises from quantum $Sp(4)$.
\end{theorem}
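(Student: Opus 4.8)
The plan is to follow the template of Parts I and II: first normalise the single $2$-box generator and record the relations it automatically satisfies, then use the extremal value $\dim\mathscr{S}_{3,\pm}=14$ to force enough skein relations that $\mathscr{S}$ is recognised as a quotient of the BMW planar algebra, and finally use positivity of the Markov trace to decide which BMW specialisations can occur.

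\emph{Step 1 (normalisation).} One first reduces to $\dim\mathscr{S}_{2,\pm}=3$ (otherwise $\mathscr{S}$ falls into an already understood smaller case) and picks the generator $R\in\mathscr{S}_{2,+}$ to be self-adjoint and orthogonal to $TL_2=\operatorname{span}\{1,e_1\}$, so that $\{1,e_1,R\}$ is a basis. Since the period-two rotation on $\mathscr{S}_{2,+}$ fixes $TL_2$, it must send $R$ to $\pm R$; thus $R$ is either rotationally symmetric or antisymmetric, which is the first case split. The products $R^2$, $Re_1$, $e_1Re_1$ all lie in the three-dimensional space $\mathscr{S}_{2,+}$, so $R$ satisfies a quadratic $R^2=a\,1+b\,e_1+c\,R$ and $Re_1$ is pinned down up to a couple of scalars. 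Rescaling $R$ and translating it within its $TL_2$-coset, I would put this data into a normal form; the aim is that the adjusted generator should behave like the BMW braiding — two eigenvalues (the Kauffman skein relation), a loop value $\delta$, and a twist scalar coming from the Reidemeister-I cap.

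\emph{Step 2 (forcing BMW).} The annular Temperley--Lieb calculus shows $\mathscr{S}_{3,+}$ is spanned by $TL_3$ together with the annular consequences of $R$, and a direct count of these consequences gives $\dim\mathscr{S}_{3,+}\le 14$; the extremal case $=14$ then pins down strong relations, in particular a reduction rule expressing the length-two word $R_1R_2$ through $TL_3$ and single-$R$ diagrams. Iterating this rule together with the quadratic should force the braid relation $R_1R_2R_1=R_2R_1R_2$: concretely, one checks that the Yang--Baxter defect $Y=R_1R_2R_1-R_2R_1R_2$ has all of its caps and partial traces expressible through the already-established relations, so that the $14$-dimensional count makes $Y$ orthogonal to a basis of $\mathscr{S}_{3,+}$, whence $Y=0$. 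Adjoining the capping relations, $\mathscr{S}$ is a quotient of the BMW planar algebra, and since the Kauffman skein relations evaluate every closed labelled diagram, the entire structure is then determined by the scalar parameters $\delta$ and the twist.

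\emph{Step 3 (which BMW, and the obstacle).} Generic BMW has fifteen-dimensional $3$-boxes, so $\dim\mathscr{S}_{3,+}=14$ confines the parameters to the locus where $BMW_3$ loses exactly one dimension; evaluating the Gram matrix of the fifteen standard $3$-box diagrams (each entry a closed diagram, hence an explicit function of $\delta$ and the twist via Kauffman skein) cuts out this locus and shows it splits into an isolated point and a curve. Positivity of the Markov trace then restricts each branch: on the curve one is left with a one-parameter interval, which is the planar algebra of the $4$-dimensional representation of $U_q(\mathfrak{sp}_4)$ — for which $\dim\operatorname{End}(V^{\otimes 3})=3^2+2^2+1^2=14$ — and on the isolated point one obtains the depth-$3$ quantum $SO(3)$ planar algebra, namely $U_q(\mathfrak{su}_2)$ at the relevant root of unity, with $\dim\operatorname{End}(V_2^{\otimes 3})=1+9+4=14$; a final check that these are honest subfactor planar algebras with $14$-dimensional $3$-boxes closes the argument. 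The main obstacle is Step 2: a priori $R$ obeys only a quadratic and nothing obviously forces it to braid, so Yang--Baxter must be squeezed out of the single numerical input. In practice this means setting up the $14\times14$ Gram matrix with every entry reduced to a polynomial in the parameters, showing its rank drops exactly when the braid relation fails, and then disposing of the degenerate sub-cases — especially the ``non-generic'' situation flagged in the introduction, where $\delta$ or the twist is a small special value and the generic Gram-matrix argument collapses, so that extra \emph{ad hoc} relations together with positivity are needed to rule out stray non-BMW candidates.
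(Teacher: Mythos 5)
Your Step 2 contains a concrete error and, by your own admission, leaves the central difficulty unresolved. The ``reduction rule expressing the length-two word $R_1R_2$ through $TL_3$ and single-$R$ diagrams'' cannot exist here: such a relation is exactly an exchange relation, and an exchange-relation generator forces $\dim(\mathscr{S}_{3,\pm})\leq 13$, contradicting the hypothesis. In the $14$-dimensional case the $5+6+3$ face-free diagrams (Temperley--Lieb, one-vertex, two-vertex) are linearly \emph{independent} and form a basis; the relations that do exist reduce the three-vertex diagrams containing a face (the two ``triangles''), and the paper's skein lemma shows the coefficients of those reductions are determined by the $2$-box structure (products, coproducts, adjoints, contragredients), so that the whole planar algebra is determined by its $2$-box data. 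Moreover, the Yang--Baxter element is not the normalised generator $R$: it is a specific combination $U=z_1e+qP_1-q^{-1}P_2$ whose coefficients depend on structural constants that your proposal never determines. The paper obtains these by first pinning down the principal graph to depth $3$ (using the biprojection/virtual-normalizer obstructions to exclude Fuss--Catalan-type degeneracies --- your ``non-generic'' worry), deducing self-contragredience and explicit coproduct formulas in terms of $a=tr(P_1)$, $b=tr(P_2)$, and then computing the chirality $\mathcal{F}(bP_1-aP_2)=\pm(bQ_1-aQ_2)$: the $+$ sign forces $\delta\leq 9/4$ and hence the depth-$3$ $SO(3)$ case, while the $-$ sign determines $b/a$ as an explicit function of $\delta$, leaving a one-parameter family.

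Your proposed mechanism for forcing Yang--Baxter --- a $14\times14$ Gram-matrix rank computation in the parameters ``$\delta$ and the twist'' --- is not carried out, and it is unclear how it could be without the structural constants above; you flag this yourself as ``the main obstacle.'' The paper closes this gap in two ways: (i) it constructs $U$ explicitly from $\delta$, $a$, $b$ and the chirality, notes that the coefficients of the Yang--Baxter defect (expanded in the $14$-diagram basis via the triangle reduction of Lemma~\ref{ppp0}) are rational functions of the parameter that vanish at the infinitely many points where BMW from $Sp(4)$ is known to exist, hence vanish identically; or (ii) it checks that the $2$-box structure of $\mathscr{S}$ coincides with that of BMW and invokes the skein uniqueness lemma. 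Your Step 3 (locating the $14$-dimensional locus inside BMW and using trace positivity to keep only the $SO(3)$ point and the $Sp(4)$ family) matches the paper's Section~\ref{bmw14} in outline, but as written the proof has a genuine gap at its core: the passage from $\dim(\mathscr{S}_{3,\pm})=14$ to the braid relation.
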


When $\dim(\mathscr{S}_{3,\pm})\leq 13$, the generator satisfies an $exchange$ $relation$ \cite{Bis94,Lan02}. Then the planar algebra is determined by the structure of 2-boxes, i.e., adjoints, contragredients, products and coproducts, which are easily computable.
When $\dim(\mathscr{S}_{3,\pm})=14$, the Yang-Baxter relation is still good enough to show that the whole planar algebra structure is determined by the 2-box operations, see Lemma \ref{skein}. We may derive all the
parameters of  the structure of 2-boxes by  direct computations similar to those of \cite{BisJon02}. To simplify the computations, we will use results proved by the third author in \cite{Liuex} which prove the existence of a $biprojection$ under certain conditions, see Section \ref{bipro}.

\begin{ac}
The third author would like to thank the first two authors for their direction for this paper, Emily Peters for her help on the study of planar algebras, and Hans Wenzl and Eric Rowell for helpful discussions about BMW algebras.

Dietmar Bisch and Zhengwei Liu were supported by NSF Grant DMS-1001560. Vaughan Jones was supported by NSF Grant DMS-0301173.
All authors were supported by DOD-DARPA Grant HR0011-12-1-0009.
\end{ac}

\section{Background}\label{back}
We refer the reader to \cite{Jon12} for the definition of a subfactor planar algebra.

\subsection{Notation}
Suppose $\mathscr{S}=\{\mathscr{S}_{n,\pm}\}_{n\in \mathbb{N}_0}$ is a subfactor planar algebra. We call an element in $\mathscr{S}_{n,\pm}$ an $n$-box.
We use the following notation,
$\delta$ is the value of a closed circle; $id$ is the identity of $\mathscr{S}_{2,+}$; $e$ is the Jones projection of $\mathscr{S}_{2,+}$;
the (unnormalized) Markov trace on  $\mathscr{S}_n$ is denoted by $tr_n(x)=\gra{notation2}$, $\forall ~x\in\mathscr{S}_n$,
when $n=2$, we write $tr(x)$ for short.
For $a, b\in \mathscr{S}_{2,\pm}$, we define
$\mathcal{F}(a)=\gra{nota1}$ to be the 1-click rotation of $a$;
$\overline{a}=\gra{nota2}$ to be the contragredient of $a$;
$ab=\graa{nota3}$ to be the product of $a$ and $b$;
$a*b=\gra{nota4}$ to be the coproduct of $a$ and $b$.
It is easy to check that $\mathcal{F}(ab)=\mathcal{F}(a)*\mathcal{F}(b)$.

\begin{definition}
A 2-box $U$ is called bi-invertible, if there is a 2-box $V$ such that $UV=id$, $U*\mathcal{F}^2(V)=\delta e$.
Furthermore it is a bi-unitary if $V=U^*$.
\end{definition}

\begin{definition}
For two self-adjoint operators $x$ and $y$, we say $x$ is weaker (resp. stronger) than $y$ if the support of $x$ (resp. $y$) is a subprojection of the support of $y$ (resp. $x$), written as $x\preceq y$ (resp. $y\succeq x$). If $x\preceq y$ and $y\preceq x$, then they have the same support, written as $x\sim y$.
\end{definition}
For a self-adjoint operator $x$ and a projection $p$, $x\preceq p$ is equivalent to $x=pxp$.

\subsection{Biprojections}\label{bipro}
Biprojections were introduced by the first author while considering the projection onto an intermediate subfactor \cite{Bis94}.
The subfactor planar algebra generated by a biprojection is well understood, named a Fuss-Catalan subfactor planar algebra \cite{BisJonFC}. It has at most 12 dimensional 3-boxes.
Exchange relation planar algebras were introduced by Landau \cite{Lan02} motivated by the exchange relation of a biprojection \cite{Bis94}.
The following results from Section 4 in \cite{Liuex} ensure the existence of a biprojection.

\begin{definition}[Definition 4.2 in \cite{Liuex}]
Suppose $X\in\mathscr{S}_{2,+}$ is a positive operator, and $X=\sum_{i=1}^kC_iP_i$ for some mutually orthogonal minimal projections $P_i\in\mathscr{S}_{2,+}$ and $C_i>0$, for $1\leq i\leq k.$
Let us define the rank of $X$ to be $k$, denoted by $r(X)$.
\end{definition}

\begin{lemma}[Theorem 4.1 and Lemma 4.3 in \cite{Liuex}]\label{thickness}
Suppose $P$ and $Q$ are projections in $\mathscr{S}_{2,+}$. Then $\overline{P}*Q$ is positive and
$$r(\overline{P}*Q)\leq \dim(Q\mathscr{S}_{3,+} P),$$
where $Q\mathscr{S}_3 P=\{QxP| x\in\mathscr{S}_{3,+}\}$.
\end{lemma}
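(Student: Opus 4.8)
The plan is to read off both assertions from the planar tangle defining the coproduct. Recall that the coproduct of two $2$-boxes factors as: insert the two boxes into the two input disks of a $3$-box-shaped tangle, then cap off one through-pair of strings of the resulting $3$-box. Using $\mathcal{F}(ab)=\mathcal{F}(a)*\mathcal{F}(b)$ one first rewrites
$$\overline{P}*Q=\mathcal{F}\big(\mathcal{F}(P)\,\mathcal{F}^{-1}(Q)\big),$$
since $\overline{P}=\mathcal{F}^{2}(P)$. Because $P=P^{*}=P^{2}$ and $Q=Q^{*}=Q^{2}$, I would split $P=P\cdot P$ and $Q=Q\cdot Q$ inside the diagram; after isotopy the capped tangle regroups as $V^{*}V$ for an explicit ``half-diagram'' $V$ built from one copy each of $P$ and $Q$, which gives $\overline{P}*Q\ge 0$. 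Equivalently, one checks $\overline{P}*Q=\delta^{c}\,\mathcal{E}(\widehat{Q}\,\widehat{P}\,\widehat{Q})$, where $\widehat{P},\widehat{Q}\in\mathscr{S}_{3,+}$ are the images of $P,Q$ under the standard embedding (add a through-string), so that $\widehat{Q}\,\widehat{P}\,\widehat{Q}=(\widehat{P}\widehat{Q})^{*}(\widehat{P}\widehat{Q})\ge 0$, and $\mathcal{E}\colon\mathscr{S}_{3,+}\to\mathscr{S}_{2,+}$ is the completely positive partial-trace dual to that embedding; positivity is then immediate.

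For the rank bound, I would first observe that $\overline{P}*Q$, being positive in the finite-dimensional $C^{*}$-algebra $\mathscr{S}_{2,+}$, has $r(\overline{P}*Q)$ equal to its rank as an operator on the multiplicity-free (``standard'') left module of $\mathscr{S}_{2,+}$, equivalently $\sum_{\alpha}\operatorname{rank}(s_{\alpha})$ where $s=\sum_{\alpha}s_{\alpha}$ is the support projection of $\overline{P}*Q$ written over the simple summands $\alpha$ of $\mathscr{S}_{2,+}$. The half-diagram $V$ of the first step, together with the embedding $\widehat{(\,\cdot\,)}$, yields a linear map
$$\Phi\colon Q\,\mathscr{S}_{3,+}\,P\longrightarrow\mathscr{S}_{2,+},\qquad \Phi(z)=\text{cap off one through-pair of }z,$$
which is well-defined on the corner because $\widehat{Q},\widehat{P}$ are idempotents absorbed by $z$, and which sends the canonical $3$-box coming from the coproduct tangle to $\overline{P}*Q$ itself. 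Pre-composing with right multiplication by the embedded elements of $\mathscr{S}_{2,+}$, one checks that the image of $\Phi$ contains the whole range of the operator $\overline{P}*Q$ on the standard module; hence $\dim_{\mathbb{C}}\!\big(Q\,\mathscr{S}_{3,+}\,P\big)\ge\dim\operatorname{ran}(\overline{P}*Q)=r(\overline{P}*Q)$, which is the claim.

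The hard part will be the diagram bookkeeping: fixing the orientations and the exact placement of $P$ and $Q$ relative to the $3$-box slot so that (i) the regrouping of the capped tangle into $V^{*}V$ is legitimate — which is precisely where one uses that $P,Q$ are projections, not merely positive operators — and (ii) the map $\Phi$ really surjects onto a subspace of $\mathscr{S}_{2,+}$ large enough to contain the range of $\overline{P}*Q$. Once the tangles are pinned down, both points reduce to isotopy of diagrams and to elementary linear algebra in finite-dimensional $C^{*}$-algebras, using only the standard trace and partial-trace identities of subfactor planar algebras; the complete argument is carried out in Section~4 of \cite{Liuex}.
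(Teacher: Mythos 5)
Two preliminary remarks before the substantive one. The paper never proves this lemma: it is imported verbatim from Theorem 4.1 and Lemma 4.3 of \cite{Liuex}, so your closing sentence deferring to ``Section 4 of \cite{Liuex}'' cannot serve as the missing details --- for your proposal to count as a proof it must stand alone. The positivity half essentially does stand alone: splitting $P=P^{*}P$, $Q=Q^{*}Q$ in the coproduct tangle and regrouping it as $V^{*}V$ (equivalently, checking $tr\bigl(w^{*}(\overline{P}*Q)w\bigr)=\|Vw\|^{2}\geq 0$ for all $2$-boxes $w$) is the standard argument; note it uses only positivity of $P,Q$, so your parenthetical claim that this is ``precisely where one uses that $P,Q$ are projections'' is inaccurate, and your ``equivalent'' identity $\overline{P}*Q=\delta^{c}\,\mathcal{E}(\widehat{Q}\,\widehat{P}\,\widehat{Q})$ is wrong as written: if both hats denote the same one-strand embedding $x\mapsto x\otimes 1$, then $\widehat{Q}\,\widehat{P}\,\widehat{Q}=\widehat{QPQ}$ and any capping returns a conditional expectation of $QPQ$ (or a partial closure of it), never a coproduct; the two insertions must sit on opposite sides of the extra strand and the rotations must be tracked.

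The genuine gap is in the rank bound, i.e.\ in the only nontrivial half of the lemma. The sentence ``one checks that the image of $\Phi$ contains the whole range of the operator $\overline{P}*Q$'' is exactly the assertion to be proved, and as formulated it does not even typecheck: the range of $\overline{P}*Q$ acting on the multiplicity-free module is not a subspace of $\mathscr{S}_{2,+}$, where $\Phi$ takes values. The natural repair --- showing the image of $\Phi$ contains the principal ideal $(\overline{P}*Q)\mathscr{S}_{2,+}$ --- would give the bound $\dim(Q\mathscr{S}_{3,+}P)\geq\sum_{\alpha}\operatorname{rank}_{\alpha}(\overline{P}*Q)\dim V_{\alpha}$, a strictly stronger statement than $r(\overline{P}*Q)\leq\dim(Q\mathscr{S}_{3,+}P)$, and nothing in your sketch establishes it (nor is it needed). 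What closes the argument is the dual construction, an \emph{injection} into the corner rather than a surjection onto a range: write $\overline{P}*Q=\sum_{i=1}^{k}c_{i}P_{i}$ with $c_{i}>0$ and $P_{i}$ mutually orthogonal minimal projections, $k=r(\overline{P}*Q)$, and attach to each $i$ the element $x_{i}=Q\,\iota(P_{i})\,P\in Q\mathscr{S}_{3,+}P$ (multiplication via the inclusion $\mathscr{S}_{2,+}\subset\mathscr{S}_{3,+}$, with $\iota$ a fixed tangle inserting $P_{i}$ on the other strands) chosen so that, by isotopy, $tr_{3}(x_{j}^{*}x_{i})$ is a positive multiple of $tr\bigl(P_{j}(\overline{P}*Q)P_{i}\bigr)$. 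Orthogonality of the $P_{i}$ then gives $tr_{3}(x_{j}^{*}x_{i})=0$ for $i\neq j$, while $tr_{3}(x_{i}^{*}x_{i})>0$ because $c_{i}>0$; hence the $x_{i}$ are linearly independent and $k\leq\dim(Q\mathscr{S}_{3,+}P)$. Without this step (or an equivalent), your proposal proves only the positivity statement, not the inequality that the paper actually uses in Lemma 3.1 and Lemma 3.3.
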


If $P,Q$ are two minimal projections in $\mathscr{S}_{2,+}$, then $P,Q$ correspond to two vertices in the principal graph, and $\dim(Q\mathscr{S}_{3,+}P)$ is the number of length 2 paths between the two vertices.

\begin{theorem}[Theorem 4.10 in \cite{Liuex}]\label{P*P=P}
Suppose $P$ is a projection in $\mathscr{S}_{2,+}$.
If $P*P\preceq P$, equivalently $(P*P)P=P*P$, then $P$ is a biprojection.
\end{theorem}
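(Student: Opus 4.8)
The plan is to upgrade the hypothesis to the exact identity $P*P=\delta^{-1}\,tr(P)\,P$, i.e.\ to prove that $P*P\in\mathbb{C}P$. Applying the Fourier transform (which interchanges the two products, cf.\ $\mathcal{F}(ab)=\mathcal{F}(a)*\mathcal{F}(b)$) turns such a relation into the statement that a scalar multiple of $\mathcal{F}(P)$ is an idempotent, which is one of the equivalent characterizations of a biprojection (\cite{Bis94,Lan02}, and Section~\ref{bipro}). So the whole content is a rigidity assertion: if $P*P$ is merely supported under $P$, then it is automatically proportional to $P$.

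I would begin with the trace bookkeeping. Closing up the coproduct diagram gives the general identity $tr(P*P)=\delta^{-1}tr(P)^{2}$, and $P*P$ is positive and self-adjoint (positivity of such coproducts is part of Lemma~\ref{thickness}). Invoking the hypothesis in the form $(P*P)P=P*P$ then yields $tr\big((P*P)P\big)=tr(P*P)=\delta^{-1}tr(P)^{2}$.

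The analytic heart of the argument is a Young-type inequality for the coproduct: for every projection $P$,
$$tr\big((P*P)^{2}\big)\ \le\ \delta^{-2}\,tr(P)^{3},$$
equivalently, convolution by the positive $2$-box $P$ is bounded on $L^{2}(\mathscr{S}_{2,+},tr)$ with operator norm at most $\delta^{-1}tr(P)=\delta^{-1}\|P\|_{1}$. This estimate uses nothing about the hypothesis; it follows from the norm inequalities of Section~4 of \cite{Liuex} (and can also be deduced directly from positivity of the relevant box maps, in the spirit of Young's inequality for convolution). The $\delta$-normalization here is essential: the naive convolution bound only gives $tr((P*P)^{2})\le tr(P)^{3}$, which is weaker by the factor $\delta^{2}$ and would not close the argument below.

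Finally I would apply Cauchy--Schwarz in $L^{2}(tr)$ to the self-adjoint elements $P$ and $P*P$, and combine it with the two ingredients above:
$$\delta^{-2}tr(P)^{4}\ =\ tr\big((P*P)P\big)^{2}\ \le\ tr(P)\,tr\big((P*P)^{2}\big)\ \le\ tr(P)\cdot\delta^{-2}tr(P)^{3}\ =\ \delta^{-2}tr(P)^{4}.$$
Hence every inequality is an equality; equality in Cauchy--Schwarz forces $P$ and $P*P$ to be linearly dependent in $L^{2}(tr)$, so $P*P=\lambda P$, and comparing traces gives $\lambda=\delta^{-1}tr(P)$. Thus $P*P=\delta^{-1}tr(P)\,P$ and $P$ is a biprojection. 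The main obstacle is the Young-type inequality, in the correct direction and with the right power of $\delta$; once it is in hand the rest is a single application of Cauchy--Schwarz together with the two elementary trace identities.
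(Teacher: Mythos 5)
First, note that this paper does not actually prove the statement: Theorem \ref{P*P=P} is quoted from \cite{Liuex} (Theorem 4.10 there), so your argument can only be measured against that external proof, which proceeds quite differently from what you propose. Your analytic reduction is attractive and its first half is essentially sound: granting irreducibility (which is needed for $tr(P*P)=\delta^{-1}tr(P)^2$ and for $P*id=\delta^{-1}tr(P)\,id$), the Young-type bound $\|P*x\|_2\le \delta^{-1}tr(P)\|x\|_2$ is true, and together with Cauchy--Schwarz and $tr\bigl((P*P)P\bigr)=tr(P*P)$ it does force $P*P=\delta^{-1}tr(P)\,P$. But be careful with the attribution: this Young inequality with the constant $\delta^{-1}$ is not among the ``norm inequalities of Section 4 of \cite{Liuex}''; it needs a proof (e.g.\ positivity of the convolution map $x\mapsto P*x$, the computation $\|P*id\|_\infty=\delta^{-1}tr(P)$, and $L^1$--$L^\infty$ interpolation, or the later Jiang--Liu--Wu argument), so as written this step is an appeal to a result that is not where you say it is.

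The genuine gap is at the end. From $P*P=\lambda P$ you conclude only that $\lambda^{-1}\mathcal{F}(P)$ (more precisely $\lambda^{-1}\mathcal{F}^{-1}(P)$) is an \emph{idempotent}; the definition of a biprojection requires $\mathcal{F}(P)$ to be a multiple of a \emph{projection}, i.e.\ a self-adjoint idempotent. Since $(\mathcal{F}(P))^{*}=\mathcal{F}^{-1}(P)=\mathcal{F}(\overline{P})$, self-adjointness is exactly the statement $\overline{P}=P$, and this does not follow formally from $P*P=\lambda P$ (taking contragredients only yields $\overline{P}*\overline{P}=\lambda\overline{P}$). So the phrase ``which is one of the equivalent characterizations of a biprojection'' is where the proof leaks: idempotency of a multiple of $\mathcal{F}(P)$ alone is not such a characterization. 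In the group model this missing step is precisely the pigeonhole argument that a multiplicatively closed finite subset is a subgroup (hence inverse-closed); your scheme never supplies its planar-algebra analogue. The gap is repairable within your framework, but it takes real additional work: using the rotational symmetry of the triangle trace one gets $tr\bigl((P*\overline{P})\overline{P}\bigr)=\delta^{-1}tr(P)^2$, rerunning your Young/Cauchy--Schwarz equality argument then gives $P*\overline{P}=\lambda\overline{P}$ and $\overline{P}*P=\lambda\overline{P}$; the identity $tr\bigl((\overline{P}*P)e\bigr)=\delta^{-1}tr(P)$ then forces $e\le \overline{P}$ (hence $e\le P$), and comparing $P*\overline{P}=\lambda\overline{P}$ with $P*\overline{P}\ge P*e=\delta^{-1}P$ gives $P\le\overline{P}$, whence $P=\overline{P}$ by symmetry. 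Without some such argument the proposal does not prove the theorem.
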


\begin{definition}[Definition 4.21 in \cite{Liuex}]
Suppose $P$ is a central minimal projection in $\mathscr{S}_2$, such that $tr(P)>1$ and $r(P*Q)=1$ (resp. $r(Q*P)=1$), for any minimal projection $Q$ in $\mathscr{S}_{2,+}$, $Q\neq \overline{P}$. Then we call $P$ a left (resp. right) virtual normalizer. If $P$ is a left and right virtual normalizer, then we call it a virtual normalizer.
\end{definition}


\begin{theorem}[Theorem 4.22 in \cite{Liuex}]\label{virtual normalizer}
Suppose $\mathscr{S}$ is a subfactor planar algebra generated by $\mathscr{S}_2$.
If $\mathscr{S}_2$ contains a left (or right) virtual normalizer, then either $\mathscr{S}$ is Temperley-Lieb or $\mathscr{S}$ is a free product of two non-trivial subfactor planar algebras.
\end{theorem}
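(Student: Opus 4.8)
The plan is to distill a biprojection out of the virtual normalizer and then recognize the planar algebra through the known structure attached to a biprojection. Assume throughout that $P$ is a \emph{left} virtual normalizer, the right case being symmetric after applying $\mathcal{F}^2$ and passing to contragredients. The first step is to read off the group-like behaviour encoded in the rank hypothesis: for every minimal projection $Q\neq\overline{P}$, positivity of $\overline{P}*Q$ (Lemma \ref{thickness}) together with $r(P*Q)=1$ forces $P*Q$ to be a \emph{positive multiple of a single minimal projection}. Thus coproducting with $P$ is a partial, multiplicity-free operation on minimal projections, with $\overline{P}$ playing the role of an inverse. Because the coproduct is coassociative and $e$ is its unit, the iterated coproduct powers $P,\ P*P,\ P*P*P,\dots$ remain rank one as long as one does not land on $\overline{P}$, so they generate a finite, cyclic-type orbit. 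The trace constraint $tr(P)>1$ guarantees $P\neq e$, so this orbit is genuinely nontrivial.

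The second step is to close this orbit up into a projection. Let $B$ be the sum of $e$ and the (mutually orthogonal, central) minimal projections reached from $e$ by coproducting with $P$ and $\overline{P}$. The essential point — and this is exactly where $r(P*Q)=1$ is used — is that multiplicity-freeness is what keeps these iterated coproducts from spilling rank outside the orbit, so that the orbit is closed under $*$ and one obtains $B*B\preceq B$. By Theorem \ref{P*P=P} this identifies $B$ as a biprojection, with $e\preceq B$ and $e\neq B$ since the support of $P$ already lies strictly beyond $e$. (The self-contragredient case $P=\overline{P}$, in which $P*P$ escapes the rank hypothesis, must be treated separately, but there the orbit degenerates and is absorbed into the dichotomy below.)

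The proof then branches on the size of $B$. If $B=id$, the orbit of $e$ exhausts all central minimal projections, and invariance of the full identity under the $P$-action, combined with the standing assumption that $\mathscr{S}$ is generated by $\mathscr{S}_2$, leaves no room for minimal projections beyond those manufactured by Temperley--Lieb diagrams; one concludes that $\mathscr{S}$ is Temperley--Lieb. If instead $e\neq B\neq id$, then $B$ is a proper nontrivial biprojection, cutting $\mathscr{S}$ into two ``corners''. A planar algebra containing a biprojection is a composite, and here I would invoke $r(P*Q)=1$ a second time: it says the two corners fuse with no multiplicity and generate no relations beyond concatenation, which is precisely the combinatorial signature of the Bisch--Jones free product. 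Since $\mathscr{S}$ is generated by $\mathscr{S}_2$, matching the $2$- and $3$-box data of the two corners identifies $\mathscr{S}$ with a free product $\mathscr{A}*\mathscr{B}$ of the two nontrivial subfactor planar algebras sitting on either side of $B$.

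The main obstacle is the final identification with a \emph{free} product rather than an arbitrary composite: a general biprojection yields only an intermediate-subfactor planar algebra, and freeness is genuine extra content that must be extracted from the rank-one condition. I expect the technical crux to be showing that $r(P*Q)=1$ forces the principal graph of $\mathscr{S}$ to coincide with the free-product graph of the two corners, after which the hypothesis that $\mathscr{S}$ is generated by $\mathscr{S}_2$ upgrades the graph-and-fusion match to an isomorphism of planar algebras. A secondary difficulty is the degenerate endpoint $B=id$ (and the self-contragredient subcase), where one must verify directly that no non-Temperley--Lieb $2$-box can survive.
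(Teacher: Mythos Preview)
The paper does not prove this theorem. Theorem~\ref{virtual normalizer} is quoted verbatim as Theorem~4.22 of \cite{Liuex} and is used as a black box (in the proof of Lemma~\ref{prin}); no argument for it appears anywhere in the present paper. There is therefore nothing here to compare your proposal against.

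That said, a few remarks on your sketch on its own terms. The overall shape---extract a biprojection from the orbit of $e$ under $P*(-)$, then invoke Theorem~\ref{P*P=P} and split on whether $B$ is proper---is plausible and is indeed the kind of mechanism one expects behind such a statement. But two of the steps you flag as ``obstacles'' are genuine holes, not just technicalities. First, the branch $B=id$: you assert that a full orbit forces $\mathscr{S}$ to be Temperley--Lieb, but nothing you have written explains why the orbit exhausting the minimal projections of $\mathscr{S}_2$ should collapse $\dim\mathscr{S}_2$ down to~$2$, which is what Temperley--Lieb requires; the hypothesis $tr(P)>1$ has to be used here in an essential way and you have not said how. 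Second, in the proper-biprojection branch, ``rank one implies free product'' is the entire content of the theorem in that case, and appealing to it as a ``combinatorial signature'' is circular; one needs an actual argument matching $\mathscr{S}$ to the Fuss--Catalan model, typically via an evaluation algorithm or a principal-graph computation, and that is precisely what the cited reference supplies. Your proposal is a reasonable outline of what the proof in \cite{Liuex} presumably does, but it is not itself a proof.
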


\subsection{Skein Theory}\label{skein theory}
Suppose $\mathscr{S}$ is a subfactor planar algebra generated by a 2-box. If $\dim(\mathscr{S}_{3,\pm})=14$, then $\dim(\mathscr{S}_{2,\pm})=3$, since $\dim(\mathscr{S}_{2,\pm})^2\leq \dim(\mathscr{S}_{3,\pm})$.
Let $R$ be the non-Temperley-Lieb 2-box generator viewed as a degree 4 vertex. Considering the diagrams without faces in $\mathscr{S}_{3,+}$, there are 5 Temperley-Lieb diagrams, 6 diagrams with one vertex, 3 diagrams with two vertices as follows,
$$~~\grs{1}, ~~\grs{2}, ~~\grs{3}, ~~\grs{4}, ~~\grs{5}; ~~\grs{6}, ~~\grs{7}, ~~\grs{8}, ~~\grs{9}, ~~\grs{10}, ~~\grs{11}; ~~\grs{12}, ~~\grs{13}, ~~\grs{14},$$
where the position of $R$ indicates the position of $\$$.
Note that the linear span of the 14 diagram does not depend on the choices of the positions of the $\$$'s.

\begin{proposition}
If $\dim(\mathscr{S}_{3,+})=14$, then the 14 diagrams in $\mathscr{S}_{3,+}$ with at most 2 vertices form a basis.
\end{proposition}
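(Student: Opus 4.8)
The plan is to reduce the claim to linear independence. The list contains exactly $14$ diagrams and $\dim(\mathscr{S}_{3,+})=14$ by hypothesis, so it suffices to prove the $14$ diagrams are linearly independent, whereupon they are automatically a basis (equivalently, one shows directly that they span). Two standing facts drive the argument. Since $\mathscr{S}$ is generated by the single $2$-box $R$, the space $\mathscr{S}_{3,+}$ is linearly spanned by the images of planar tangles all of whose input discs are labelled $R$ and which contain no closed string (``reduced tangles''). And since $\dim(\mathscr{S}_{3,\pm})=14$ forces $\dim(\mathscr{S}_{2,\pm})=3$, the sets $\{id,e,R\}$ and $\{id,e,\mathcal{F}(R)\}$ are bases of $\mathscr{S}_{2,+}$ and $\mathscr{S}_{2,-}$; hence every $2$-box assembled from two copies of $R$ --- in particular $R^{2}$, $R*R$, $\overline{R}$, $\mathcal{F}(R)$ and all their rotations and composites --- is a linear combination of $2$-boxes containing at most one copy of $R$.

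Read as local rewriting moves, these $2$-box relations collapse any sub-tangle in which two $R$-discs are joined by a pair of strands, or in which two neighbouring legs of a single $R$-disc are capped, each move strictly lowering the number of $R$-discs. A short planarity-and-counting argument --- pigeonhole, supplemented by the non-planarity of $K_{5}$ --- shows that a reduced tangle with no boundary points and at most five input discs always contains such a configuration, hence reduces to a scalar using the $2$-box relations alone. Consequently any closed labelled diagram obtained by closing up a few of the listed diagrams against each other is explicitly computable in terms of $\delta$ and the finitely many parameters of the $2$-box structure of $\mathscr{S}_{2,\pm}$ (the traces, products and coproducts of $R$); this is what makes the ensuing independence computation finite.

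With this in hand I would prove independence by a filtration by the number of $R$-discs. Order the $14$ diagrams as the $5$ Temperley--Lieb diagrams, the $6$ one-vertex diagrams, and the $3$ two-vertex diagrams, and, given a relation $\sum c_{i}D_{i}=0$, pair it successively with functionals $x\mapsto tr_{3}(x\,y)$ for suitably chosen $y\in\mathscr{S}_{3,+}$ built from $R$ and the Jones projections, arranged to annihilate every diagram with fewer $R$-discs while separating the ones with the most; this kills first the three two-vertex coefficients, then the six one-vertex coefficients, and finally the five Temperley--Lieb coefficients (the last being independent for the $\delta$ occurring here). The matrices of $2$-box quantities arising at each stage are exactly those shown computable above. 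The hard part is verifying that these matrices are non-singular, i.e. that no coincidence among the adjoints, contragredients, products and coproducts of $R$ makes a filtered quotient collapse --- equivalently, that the span of the $14$ diagrams is not a proper subspace of $\mathscr{S}_{3,+}$. This is where the hypothesis $\dim(\mathscr{S}_{3,+})=14$ must be used in an essential way, rather than merely to pass from ``independent'' to ``basis''; once this non-degeneracy is established, the $14$ diagrams are linearly independent and hence form a basis of $\mathscr{S}_{3,+}$.
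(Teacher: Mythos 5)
Your reduction of the claim to linear independence is fine, and your preliminary observations (that $\dim(\mathscr{S}_{2,\pm})=3$ makes $\{id,e,R\}$ a basis of the $2$-boxes, and that closed diagrams with few vertices are evaluable from the $2$-box data) are correct --- the latter is essentially what the paper uses later, in Lemma \ref{reducetriangle}. But the argument does not close. Your filtration/trace-pairing scheme requires the pairing matrices to be non-singular, and you yourself concede that this is ``the hard part,'' restating it as the assertion that the span of the $14$ diagrams is not a proper subspace of $\mathscr{S}_{3,+}$ --- which is precisely equivalent to the proposition being proved. No mechanism is given for how the hypothesis $\dim(\mathscr{S}_{3,+})=14$ enters; as written the proposal is circular at the decisive step. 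Note also that no generic non-degeneracy computation can succeed: for subfactor planar algebras generated by a $2$-box with $\dim(\mathscr{S}_{3,\pm})\le 13$ (the group planar algebra for $\mathbb{Z}_3$, Fuss--Catalan, the group/subgroup example) these same $14$ diagrams \emph{are} linearly dependent, and at this stage the $2$-box structure constants of $R$ (its products and coproducts) are not yet determined --- they are only computed in Section \ref{classification} --- so the Gram matrix cannot be evaluated, let alone shown invertible. The hypothesis must be used structurally, not numerically.

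The paper's proof supplies exactly the missing idea, and it is short: suppose the $14$ diagrams are linearly dependent. Whether or not the $11$ diagrams with at most one vertex are independent, the dependence lets one express (up to rotation) a two-vertex diagram in terms of diagrams with fewer vertices, i.e.\ the generator $R$ satisfies an exchange relation in the sense of Landau. Then $\mathscr{S}$ is an exchange relation planar algebra, which forces $\dim(\mathscr{S}_{3,\pm})\le 13$, contradicting the hypothesis. Hence the $14$ diagrams are linearly independent, and being $14$ vectors in a $14$-dimensional space they automatically form a basis. Your proposal needs this exchange-relation obstruction (or some substitute for it) at the exact point you flagged; without it there is a genuine gap.
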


\begin{proof}
Let us assume that $\dim(\mathscr{S}_{3,\pm})=14$ and the 14 diagrams are linearly dependent.

If the 11 diagrams with at most one vertex are linearly independent, then one of the diagrams with two vertices is a linear combination of the other 13 diagrams. Up to rotation, we obtain the exchange relation for the generator $R$. Therefore $\mathscr{S}$ is an exchange relation planar algebra and $\dim(\mathscr{S}_{3,\pm})=13$, a contradiction.

If the 11 diagrams with at most one vertex are linearly dependent, then $R$ still satisfies an exchange relation and $\dim(\mathscr{S}_{3,\pm})=13$, a contradiction.

Therefore the 14 diagrams are linearly independent.
\end{proof}

Since the 14 diagrams above form a basis of $\mathscr{S}_{3,+}$, the diagrams $~~\grs{15}, ~~\grs{16}$ reduce to linear combinations of the 14 diagrams. We are going to show that the coefficients only depend on the $structure$ $of$ $2$-$boxes$ defined as follows.

\begin{definition}
The structure of 2-boxes of a subfactor planar algebra consists of the data of adjoints, contragredients, products and coproducts of 2-boxes.
\end{definition}

The following data is also derived from the structure of 2-boxes,
the identity $id$ is identified as the unique unit of 2-boxes under the product; the value of a closed circle $\delta$ is determined by the coproduct of two identities; $\delta e$ is identified as the unique unit of 2-boxes under the coproduct; the trace of a 2-box is determined by its coproduct with the identity $id$. If the planar algebra is irreducible, then capping a 2-box is also determined.

\begin{lemma}\label{reducetriangle}
Suppose that $\dim(\mathscr{S}_{3,+})=14$.
When reducing $~~\grs{15}, ~~\grs{16}$ as linear combinations of the 14 diagrams, the coefficients are determined by the structure of 2-boxes.
\end{lemma}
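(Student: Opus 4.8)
The plan is to pin down the coefficients by pairing against the Markov trace, and then to check that all the closed diagrams that arise in doing so are computable from the structure of 2-boxes. A direct reduction of $\grs{15},\grs{16}$ need not be available: a triangle of three copies of $R$ has no monogon or bigon face, so there is no local relation to apply to it, and one must close it up. (When one of $\grs{15},\grs{16}$ does contain a bigon, absorbing that bigon into a product of two 2-boxes finishes that case directly; the argument below handles both uniformly.)

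Recall that for a subfactor planar algebra the Markov trace gives a nondegenerate symmetric bilinear form $(x,y)\mapsto tr_3(xy)$ on $\mathscr{S}_{3,+}$ — if $tr_3(xy)=0$ for all $y$, put $y=x^*$ — so the Gram matrix $G=(tr_3(D_iD_j))$ of the fourteen basis diagrams $D_1,\dots,D_{14}$ is invertible. Writing $\grs{15}=\sum_i\lambda_iD_i$ and pairing with each $D_j$ yields $\lambda=G^{-1}b$ with $b_j=tr_3(\grs{15}\,D_j)$, and likewise for $\grs{16}$. Thus it suffices to show that every entry of $G$ and every $b_j$ is determined by the structure of 2-boxes; each such quantity is the value of a closed diagram labelled by at most $4$ copies of $R$ (for $G$), respectively at most $5$ (for $b$, since $\grs{15}$ and $\grs{16}$ each carry one more copy of $R$ than the basis diagrams).

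The heart of the matter is therefore the claim that every closed $R$-labelled diagram with at most five copies of $R$ evaluates to a scalar determined by the structure of 2-boxes, which I would prove by induction on the number of copies of $R$. A closed diagram is a disjoint union of connected ones, and each pure-circle component contributes a factor $\delta$, so assume the diagram is connected with at least one $R$; its underlying graph (vertices $=$ the copies of $R$, edges $=$ the strand segments between them) is then a $4$-regular plane multigraph, so on the sphere $V\le5$, $E=2V$, $F=V+2$. If every face had degree $\ge3$ then $4V=2E=\sum_f\deg(f)\ge3F=3(V+2)$, forcing $V\ge6$; since $V\le5$, some face is a monogon or a bigon. A monogon is a strand joining two cyclically adjacent legs of a single $R$, i.e.\ that $R$ is capped, and as recalled above capping a 2-box is determined by the structure of 2-boxes (the planar algebra being irreducible, as $\dim\mathscr{S}_{2,\pm}=3$); this removes that copy of $R$. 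A bigon between two distinct copies of $R$ presents them joined by two parallel strands, hence as a single 2-box, namely a product or a coproduct of the two up to rotation, which is again 2-box data and removes one copy of $R$; a bigon at a single $R$ closes it off into a scalar. In each case the number of copies of $R$ drops, possibly producing new circles (each a factor $\delta$), so the induction runs; the base cases with at most one copy of $R$ are $\delta^k$ and the trace — or a cap — of a single 2-box. This proves the claim, hence that $G$, $G^{-1}$, and $b$ have entries determined by the structure of 2-boxes, hence the lemma.

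The real work is in the inductive step, and that is where I expect the difficulty: one must verify that a parallel pair of strands is, in the appropriate shading, genuinely a product or a coproduct of two 2-boxes rather than some other contraction; that a monogon always presents a planar cap of two adjacent legs, so that the recalled formula for capping applies; and that the degenerate configurations — nested monogons, an $R$ capped on both sides at once, circles created by a reduction, disconnected pieces — are correctly absorbed into the data $\delta$, $tr$, products, coproducts, and caps that the structure of 2-boxes supplies. It is worth noting that $5$ is genuinely the worst case here: a simple $4$-regular plane graph needs at least $6$ vertices (the octahedron), whose closed diagram would not reduce by local moves — so this argument is tied to the dimension count $14$ and gives nothing for free at higher dimension.
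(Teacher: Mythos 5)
Your argument is essentially the paper's own proof: the paper also determines the coefficients by pairing $\grs{15},\grs{16}$ against the 14 basis diagrams, noting that all the resulting inner products are closed diagrams with at most 5 vertices and that such diagrams reduce to scalars via the structure of 2-boxes (nondegeneracy of the trace then inverts the Gram matrix). The only difference is that you spell out, via the Euler-characteristic/monogon-bigon argument, the step the paper simply asserts; that is a correct and welcome filling-in, not a different route.
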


\begin{proof}
Note that any closed diagram with at most 5 vertices can be reduced to a scalar by the structure of 2-boxes.
So the inner products of the 14 diagrams are determined by the structure of 2-boxes.
Moreover, the inner product of $\grs{15}$ or $\grs{16}$ with one of the 14 diagrams are also determined.
Therefore the coefficients of the 14 diagrams in $\grs{15}$ and $\grs{16}$ are determined by the structure of 2-boxes.
\end{proof}

\begin{lemma}\label{skein}
Suppose $\mathscr{S}$ is a subfactor planar algebra generated by a 2-box subject to the condition $\dim(\mathscr{S}_{3,\pm})=14$, and $\mathscr{A}$ is a subfactor planar algebra generated by a 2-box. If a linear map $\phi: \mathscr{S}_2\rightarrow\mathscr{A}_2$ is surjective and it preserves the structure of 2-boxes, then $\phi$ extends to a planar algebra isomorphism from $\mathscr{S}$ to $\mathscr{A}$.
\end{lemma}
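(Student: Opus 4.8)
The plan is to construct a planar algebra $*$-homomorphism $\Phi\colon\mathscr S\to\mathscr A$ extending $\phi$, and then to observe it is automatically an isomorphism. Some bookkeeping first. Since $\phi$ is onto $\mathscr A_2$ and respects products and adjoints, it is a unital $*$-homomorphism, so $\phi(id)=id$; and since the value $\delta$ of a circle, the element $\delta e$, the Markov trace, the rotations, and --- by irreducibility --- the cap of a $2$-box are all determined by the structure of $2$-boxes, $\phi$ fixes $\delta$ and $e$ and intertwines all of these. Hence every scalar the structure of $2$-boxes computes agrees in $\mathscr S$ and, after relabelling, in $\mathscr A$; by the argument in Lemma~\ref{reducetriangle} this includes every closed diagram with at most five vertices. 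For the automatic-isomorphism claim: any such $\Phi$ is onto because its image is a planar $*$-subalgebra containing $\phi(\mathscr S_2)=\mathscr A_2$, hence containing a $2$-box generating $\mathscr A$; and it is injective because if $0\ne x\in\ker\Phi$ then $0<tr(x^*x)\in\mathscr S_{0,+}=\mathbb C$, on which $\Phi$ is the identity, contradicting $tr(\Phi(x)^*\Phi(x))=0$ and faithfulness of the Markov trace on $\mathscr S$.

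To construct $\Phi$: as $\mathscr S$ is generated by the $2$-box $R$, equivalently by $\mathscr S_2$, every element of $\mathscr S_{n,\pm}$ is a linear combination of planar tangles whose input discs are labelled by elements of $\mathscr S_2$. Define $\Phi$ on such a labelled tangle by replacing each label $a$ with $\phi(a)$ and evaluating in $\mathscr A$, and extend linearly. Any map built this way intertwines all planar tangle actions, is unital, and --- since $\phi$ preserves adjoints --- is a $*$-map; the whole content is \emph{well-definedness}: a combination of $\mathscr S_2$-labelled tangles representing $0$ in $\mathscr S$ must go to $0$ in $\mathscr A$.

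This is exactly what the skein theory buys. The basis Proposition and Lemma~\ref{reducetriangle} together supply a terminating rewriting procedure on $R$-labelled tangles: repeatedly apply (a) isotopy and deletion of closed circles (weight $\delta$); (b) the local $2$-box relations --- collapsing two $2$-box vertices joined by two strands to a single $2$-box, or capping a $2$-box vertex --- whose outputs the structure of $2$-boxes determines; and (c) the reductions of the three-vertex triangles $\grs{15}$ and $\grs{16}$ into the $14$ diagrams, whose coefficients the structure of $2$-boxes also determines. A counting argument (Euler's formula) shows a not-yet-reduced diagram always has a face with at most three sides, so one of (a)--(c) applies; since $\mathscr S$ is an honest planar algebra the procedure reduces every element to a fixed basis, consistently, with all coefficients functions of the structure of $2$-boxes. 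Running the same procedure on the $\phi$-relabelled tangle in $\mathscr A$: moves (a), (b) are legal because $\phi$ preserves $\delta$ and the structure of $2$-boxes, and, granting that (c) is legal in $\mathscr A$ as well, the whole reduction transports verbatim under $\phi$, so a tangle representing $0$ in $\mathscr S$ has $\phi$-image reducing to the zero combination of basis diagrams in $\mathscr A$. That is well-definedness, and with it the isomorphism.

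The one genuinely nontrivial step --- the one I expect to cost real effort --- is the legality of move (c) in $\mathscr A$: that $\grs{15}(\phi(R))$ and $\grs{16}(\phi(R))$ equal, in $\mathscr A$, the prescribed $\mathscr S$-combinations of the $14$ diagrams, equivalently $\dim(\mathscr A_{3,\pm})=14$. My approach: the $14$ diagrams $D_i(\phi(R))$ have the same Gram matrix (in the Markov inner product) as the $D_i(R)$, since its entries are closed diagrams with at most five vertices and hence evaluate identically, by $2$-box moves alone, on both sides; being positive-definite in $\mathscr S$, that matrix shows the $D_i(\phi(R))$ are linearly independent in $\mathscr A_{3,+}$, so $\dim(\mathscr A_{3,+})\ge 14$, $\mathscr A$ is not Temperley--Lieb, $\dim(\mathscr A_{2,+})=3$, and $\phi$ is a $*$-isomorphism $\mathscr S_2\to\mathscr A_2$. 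Then $\grs{15}(\phi(R))$ equals its orthogonal projection onto $\operatorname{span}\{D_i(\phi(R))\}$ --- which is exactly the prescribed combination --- plus a residual orthogonal to that span, and this residual vanishes iff $\dim(\mathscr A_{3,+})=14$. To force that equality I would use Lemma~\ref{thickness} on the minimal projections $P,Q$ of $\mathscr A_{2,+}$ --- whose coproducts $\overline P*Q$ have the same ranks as in $\mathscr S$, being determined by the preserved structure of $2$-boxes --- together with the matching count in $\mathscr S$. The obstacle is that Lemma~\ref{thickness} bounds $\dim(Q\mathscr A_3 P)$ only from below, so extracting the matching upper bound, hence killing the residual and legalizing move (c), is the crux, and is where the finer structure of $2$-boxes, and possibly the biprojection criteria of Section~\ref{bipro}, must enter.
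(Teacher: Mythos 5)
Your overall architecture (extend $\phi$ to $R$-labelled tangles via the universal planar algebra, check that the defining relations of $\mathscr S$ are killed, then get injectivity from positivity of the trace and surjectivity from generation) is the same as the paper's, but the step you yourself single out as the crux --- that the three-vertex triangle diagrams labelled by $\phi(R)$ satisfy in $\mathscr A$ the same reduction into the $14$ small diagrams as in $\mathscr S$ --- is exactly the step you leave unproven, and the route you sketch for it does not close. You try to force $\dim(\mathscr A_{3,\pm})=14$ and then argue the residual vanishes; but Lemma \ref{thickness} only converts ranks of coproducts into \emph{lower} bounds on $\dim(Q\mathscr A_{3,+}P)$ (as you concede), so it cannot cap $\dim(\mathscr A_{3,\pm})$ at $14$, and your parenthetical ``residual vanishes iff $\dim(\mathscr A_{3,+})=14$'' is not justified in the direction you need. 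Pursuing this line would essentially force you to redo the classification inside $\mathscr A$, which is circular for the purpose this lemma serves. So as written there is a genuine gap.

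The missing idea is that no dimension count in $\mathscr A$ is needed: the residual can be killed directly by a trace-norm computation. Let $y$ be the relation expressing a triangle diagram $a$ as the prescribed combination of the $14$ diagrams, so $y=0$ in $\mathscr S$ and $tr(y^*y)=0$. Expanding $tr(\phi(y)^*\phi(y))$ in $\mathscr A$, every term is a closed diagram which is evaluable by $2$-box moves alone: the cross terms and the Gram terms have at most $5$ vertices, where Euler's formula guarantees a face with at most two edges at every stage of the reduction (this is the content of Lemma \ref{reducetriangle}); and the remaining term $tr(\phi(a)^*\phi(a))$, although it has six vertices, acquires $2$-gon faces when the triangle is glued to its adjoint, so it too reduces by products/coproducts of $2$-boxes. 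Since $\phi$ preserves the structure of $2$-boxes, each of these closed diagrams evaluates in $\mathscr A$ to the same scalar as in $\mathscr S$, whence $tr(\phi(y)^*\phi(y))=tr(y^*y)=0$, and positivity (faithfulness) of the Markov trace in the subfactor planar algebra $\mathscr A$ gives $\phi(y)=0$. This legalizes your move (c) without knowing $\dim(\mathscr A_{3,\pm})$ and without any appeal to the biprojection results of Section \ref{bipro}; the rest of your argument then goes through as in the paper.
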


\begin{proof}
We assume that the linear map $\phi: \mathscr{S}_2\rightarrow\mathscr{A}_2$ is surjective and preserves the structure of 2-boxes.
Let us extend $\phi$ to the universal planar algebra generated by the 2-box generator of $\mathscr{S}$.

If $Y$ is a relation of the generator derived from the structure of 2-boxes of $\mathscr{S}$, then $\phi(y)=0$ in $\mathscr{A}$, because $\phi: \mathscr{S}_2\rightarrow\mathscr{A}_2$ preserves the structure of 2-boxes.

If $y$ is a relation which reduces the diagram $a=\grs{15}$ (or $b=\grs{16}$) as a linear combination of the 14 diagrams with at most 2 vertices in $\mathscr{S}$, then $tr(y^*y)=0$.
Note that $tr(\phi(y)^*\phi(y))$ is a linear combination of closed diagrams with at most 5 vertices and $tr(a*a)$ (or $tr(b^*b)$). The evaluation of these closed diagrams in $\mathscr{A}$ only depends on the structure of 2-boxes, so $tr(\phi(y)^*\phi(y))=tr(y^*y)=0$.
Then $\phi(y)=0$ in $\mathscr{A}$ by the positivity of the trace.

By Euler's formula, a closed diagram consisting of degree 4 vertices contains a face with at most 3 edges.
By Lemma \ref{reducetriangle}, the relations reducing a face with at most 3 edges to diagrams without a face are determined by the structure of 2-boxes. By the above arguments, the image of these relations from $\mathscr{S}$ under $\phi$ are 0 in $\mathscr{A}$.
By assumption, $\mathscr{S}$ is generated by a 2-box, so $\phi$ induces a planar algebra $*$-homomorphism from the quotient $\mathscr{S}$ to $\mathscr{A}$.
By positivity of the trace, any planar algebra $*$-homomorphism of subfactor planar algebras is injective.
By assumption, $\phi$ is surjective on 2-boxes, and $\mathscr{A}$ is generated by 2-boxes, so $\phi$ is a planar algebra $*$-isomorphism.
\end{proof}

It is easy to generalize this result to the case of multiple generators.

\subsection{BMW}\label{bmw14}
We cite the conventions in \cite{Wen90}.
The Birman-Murakami-Wenzl (BMW) algebra \cite{BirWen,Mur87} is a two-parameter family of (unshaded, unoriented) planar algebras $\mathscr{C}(r,q)$ generated by a self-contragredient bi-invertible element $U=\gra{br4}$ as a solution of the Yang-Baxter equation with the following relations
$$\gra{br1}=r~\gra{br3};\quad\gra{br2}=r^{-1}~\gra{br3};$$
$$\gra{br4}-\gra{br5}=(q-q^{-1})(\gra{br6}-\gra{br7}).$$
These relations correspond to Reidemester move I, II, III of a braid $\gra{br4}$ and its quadratic equation.
The value of a closed circle is derived as $\displaystyle \delta'=\frac{r-r^{-1}}{q-q^{-1}}+1$. When $q=r=1$, BMW reduces to the Brauer algebra parameterised by $\delta'$.

\begin{remark}
Note that $\delta'$ could be negative. In this case, we will switch the Jones projection to its negative, then the value of a closed circle becomes $-\delta'$.
\end{remark}

When the ground field is $\mathbb{Q}(q,r)$, rational functions over $q$ and $r$, the minimal idempotents of BMW are labeled by Young diagrams. The trace formula is given by Theorem 5.5 in \cite{Wen90}.
When $q,r$ are fixed complex numbers, and the groud field is $\mathbb{C}$, the idempotents are constructed inductively by skein theory in \cite{BelBla}. This process stops once the trace of a minimal idempotent is 0, see Section 8 in \cite{BelBla}.

\begin{remark}
The function $Q_\lambda$ in Theorem 5.5 in \cite{Wen90} is a multiple of the trace, see page 404 in \cite{Wen90} for the proportion. When $\displaystyle \frac{r-r^{-1}}{q-q^{-1}}+1>0$, $Q_\lambda$ is our unnormalised Markov trace $tr_n$ on n-boxes. When $\displaystyle \frac{r-r^{-1}}{q-q^{-1}}+1<0$, $(-1)^nQ_\lambda$ is our unnormalised Markov trace $tr_n$ on n-boxes.
\end{remark}

\begin{notation}
We use the convention that $[n]$ is the Young diagram with $1$ row and $n$ columns; $[1^n]$ is the Young diagram with $n$ rows and 1 column.
\end{notation}

The generator $U$ could be expressed as a linear combination of minimal idempotents in 2-box space,
\begin{align*}
U&=r^{-1}e+q P_1 -q^{-1} P_2,
\end{align*}
where $P_1, P_2$ are minimal idempotents labeled by the Young diagrams $[2]$ and $[1^2]$ respectively.
By Theorem 5.5 in \cite{Wen90}, we have
\begin{align*}
tr(P_1)&=\frac{(rq-r^{-1}q^{-1}+q^{2}-q^{-2})(r-r^{-1})}{(q^{2}-q^{-2})(q-q^{-1})};\\
tr(P_2)&=\frac{(rq^{-1}-r^{-1}q+q^{2}-q^{-2})(r-r^{-1})}{(q^{2}-q^{-2})(q-q^{-1})}.
\end{align*}

\begin{lemma}\label{unique braid}
The element $U=r^{-1}e+q P_1 -q^{-1} P_2$ is the unique solution of Reidemester move I and the quadratic equation in BMW over $\mathbb{Q}(q,r)$.
\end{lemma}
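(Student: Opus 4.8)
The plan is to use that over $\mathbb{Q}(q,r)$ the $2$-box space of the BMW planar algebra is $3$-dimensional, hence a commutative semisimple algebra whose minimal idempotents are exactly $e$, $P_1$, $P_2$ (mutually orthogonal, with $e+P_1+P_2=id$). Consequently any bi-invertible $2$-box $W$ has the form $W=\alpha e+\beta P_1+\gamma P_2$ with $\alpha,\beta,\gamma\in\mathbb{Q}(q,r)^{\times}$ and $W^{-1}=\alpha^{-1}e+\beta^{-1}P_1+\gamma^{-1}P_2$, so the whole problem reduces to pinning down three scalars.

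First I would substitute such a $W$ into the quadratic relation $W-W^{-1}=(q-q^{-1})(id-\delta' e)$ (here the cup-over-cap diagram equals $\delta' e$, and $\delta'-1=\tfrac{r-r^{-1}}{q-q^{-1}}$). Comparing coefficients of the orthogonal idempotents $e,P_1,P_2$ yields $\alpha-\alpha^{-1}=-(r-r^{-1})$, $\beta-\beta^{-1}=q-q^{-1}$ and $\gamma-\gamma^{-1}=q-q^{-1}$, so $\alpha\in\{r^{-1},-r\}$, $\beta\in\{q,-q^{-1}\}$ and $\gamma\in\{q,-q^{-1}\}$. This leaves the eight vertices of a cube as candidates, one being $U=r^{-1}e+qP_1-q^{-1}P_2$ itself.

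Next I would impose Reidemeister move I by applying the right partial trace $E$, sending $2$-boxes to $1$-boxes. Using $E(e)=\delta'^{-1}\,\mathbf{1}$, $E(id)=\delta'\,\mathbf{1}$ and $E(P_i)=\delta'^{-1}tr(P_i)\,\mathbf{1}$ (with $\mathbf{1}$ the single strand and $tr(P_1),tr(P_2)$ the rational functions recalled above from Theorem 5.5 of \cite{Wen90}), the positive-curl relation $E(W)=r\,\mathbf{1}$ becomes the single scalar equation $\alpha+\beta\,tr(P_1)+\gamma\,tr(P_2)=r\delta'$. Because $U$ already satisfies it, subtracting turns the condition on $W$ into $tr_2(W-U)=0$, i.e. $(\alpha-r^{-1})+(\beta-q)\,tr(P_1)+(\gamma+q^{-1})\,tr(P_2)=0$, where the three coefficients are each $0$ or, respectively, $-(r+r^{-1})$, $-(q+q^{-1})$, $(q+q^{-1})$. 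One then checks on each of the seven nonzero sign patterns that this identity fails in $\mathbb{Q}(q,r)$: $tr(P_1)\neq 0$, $tr(P_2)\neq 0$, $tr(P_1)\neq tr(P_2)$, and every pattern involving the $e$-flip collapses to a forbidden relation between $q$ and $r$, already visible from the degrees in $r$. Hence all three differences vanish, $W=U$, and the negative-curl relation is then automatic, since $E(W^{-1})=E(W)-(q-q^{-1})(E(id)-\delta' E(e))=r\,\mathbf{1}-(q-q^{-1})(\delta'-1)\mathbf{1}=r^{-1}\mathbf{1}$.

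The hard part is the last step's case analysis — showing that none of the seven ``flipped'' candidates satisfies the one linear constraint from Reidemeister I. This genuinely needs the explicit trace formula of \cite{Wen90}, and one must keep track of normalisations (that the cup-over-cap diagram is $\delta' e$, that $tr$ is the $2$-box Markov trace with $tr(e)=1$), since over $\mathbb{Q}(q,r)$ there is no positivity available to shortcut the elimination. A secondary point to get right, visible in the last displayed identity, is that modulo the quadratic relation and the resulting value of $\delta'$ the two curl relations of Reidemeister I are not independent, so that the bookkeeping ``eight candidates minus one linear constraint'' is the correct one.
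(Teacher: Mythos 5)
Your argument is correct and is essentially the paper's own proof: expand the candidate in the idempotent basis $e,P_1,P_2$, use the quadratic relation to force each coefficient into a two-element set, and then use a curl (Reidemeister I) partial-trace equation together with Wenzl's trace formulas for $tr(P_1),tr(P_2)$ to eliminate the wrong sign choices. The only difference is bookkeeping: the paper gets $c_0=r^{-1}$ directly from the other curl relation (leaving four candidates for $(c_1,c_2)$), whereas you get $\alpha\in\{r^{-1},-r\}$ from the $e$-component of the quadratic relation and then kill all seven flipped patterns with the single equation $\alpha+\beta\,tr(P_1)+\gamma\,tr(P_2)=r\delta'$; the extra eliminations you invoke (e.g.\ $tr(P_1)\neq tr(P_2)$, the degree-in-$r$ count, and the identity reducing the all-flipped case to $(r+r^{-1})\delta'=0$) do go through over $\mathbb{Q}(q,r)$, so the proof is sound.
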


\begin{proof}
Suppose $U=c_0 e+ c_1 P_1 +c_2 P_2$ is a solution of Reidemester move I and the quadratic equation. Adding a cap at the bottom of $U$, we have $c_0=r^{-1}$. By the quadratic equation, we have $c_i-c_i^{-1}=q-q^{-1}$, for $i=1,2$ So $c_i=q$ or $c_i=-q^{-1}$. Adding a cap on the right of $U$, we have $\delta' r-r^{-1}=c_1 tr(P_1)+c_2 tr(P_2)$. Among the four choices of $(c_1,c_2)$, only $U=r^{-1}e+q P_1 -q^{-1} P_2$ satisfies this equation.
\end{proof}

The traces of $P_1$ and $P_2$ are determined by the parameters $q$ and $r$ of BMW. Now let us solve $q$ and $r$ by the two traces.
\begin{lemma}\label{unique q r}
Given real numbers $\delta',a,b$, if there are complex numbers $q,r$, such that
\begin{align*}
\delta'&=\frac{r-r^{-1}}{q-q^{-1}}+1;\\
a&=\frac{(rq-r^{-1}q^{-1}+q^{2}-q^{-2})(r-r^{-1})}{(q^{2}-q^{-2})(q-q^{-1})};\\
b&=\frac{(rq^{-1}-r^{-1}q+q^{2}-q^{-2})(r-r^{-1})}{(q^{2}-q^{-2})(q-q^{-1})},
\end{align*}
then
\begin{align*}
q^2+q^{-2}&=\frac{2(\delta'-1)^4-4(\delta'-1)^2+2(b-a)^2}{(\delta'-1)^4-(b-a)^2};\\
r&=\frac{(\delta'-1)^2(q-q^{-1})+(a-b)(q-q^{-1})}{2(\delta'-1)}.
\end{align*}
In this case either $|q|=|r|=1$ or $q,r$ are reals.
Furthermore if $\Re{q}\geq0$, $\Im{q}\geq 0$, then $q,r$ are uniquely determined by $\delta',a,b$.
\end{lemma}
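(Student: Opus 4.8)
The plan is to treat this as a system of three equations in the two unknowns $q,r$ and to solve it by elimination, producing the stated closed formulas; then to analyze the resulting quadratic constraints to see why $q,r$ must be unimodular or real, and finally to pin down uniqueness under the sign/quadrant normalization. First I would introduce the abbreviations $s = r - r^{-1}$ and $t = q - q^{-1}$, so that the first equation reads $s = (\delta'-1)t$. Since $q^2 - q^{-2} = (q-q^{-1})(q+q^{-1}) = t(q+q^{-1})$, dividing numerator and denominator by $t$ in the expressions for $a$ and $b$ turns them into
\begin{align*}
a &= \frac{(rq - r^{-1}q^{-1} + t(q+q^{-1}))\, s}{t^2 (q+q^{-1})}, &
b &= \frac{(rq^{-1} - r^{-1}q + t(q+q^{-1}))\, s}{t^2 (q+q^{-1})}.
\end{align*}
Now I would compute $a - b$, which kills the symmetric-looking $t(q+q^{-1})$ term: $a - b = \dfrac{s\,\big((rq - r^{-1}q^{-1}) - (rq^{-1} - r^{-1}q)\big)}{t^2(q+q^{-1})}$, and the numerator factors as $(r + r^{-1})(q - q^{-1}) = (r+r^{-1})\,t$ — wait, one must be careful: $(rq - r^{-1}q^{-1}) - (rq^{-1} - r^{-1}q) = r(q - q^{-1}) + r^{-1}(q - q^{-1}) = (r + r^{-1})t$. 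Hence $a - b = \dfrac{s(r+r^{-1})}{t(q+q^{-1})}$. Similarly $a + b$ will involve $s(r - r^{-1}) = s^2$ plus the symmetric term, giving a second relation. These two, together with $s = (\delta'-1)t$, are three equations in $t, q+q^{-1}, r+r^{-1}, s, r - r^{-1}$ — but $s = r - r^{-1}$ and $(r+r^{-1})^2 = (r-r^{-1})^2 + 4 = s^2 + 4$, and likewise $(q+q^{-1})^2 = t^2 + 4$, so everything can be expressed in terms of $t$ and the two "plus" quantities. I expect that eliminating $r + r^{-1}$ and $q + q^{-1}$ using these Pythagorean identities, and substituting $s = (\delta'-1)t$, collapses to a single equation determining $t^2$ (equivalently $q^2 + q^{-2} = t^2 + 2$) rationally in $\delta', a, b$; rearranging should reproduce exactly the displayed formula $q^2 + q^{-2} = \dfrac{2(\delta'-1)^4 - 4(\delta'-1)^2 + 2(b-a)^2}{(\delta'-1)^4 - (b-a)^2}$, and back-substitution into the linear relation for $r$ gives the displayed formula for $r$.

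For the dichotomy "either $|q| = |r| = 1$ or $q, r$ real," I would argue as follows. The quantity $q^2 + q^{-2}$ is forced to be a specific \emph{real} number $N$ by the formula above. Writing $q = \rho e^{i\theta}$, we have $q^2 + q^{-2} = \rho^2 e^{2i\theta} + \rho^{-2}e^{-2i\theta}$; its imaginary part is $(\rho^2 - \rho^{-2})\sin 2\theta$, which vanishes iff $\rho = 1$ (i.e. $|q| = 1$) or $\sin 2\theta = 0$ (i.e. $q$ real or $q$ purely imaginary). Since $q - q^{-1} = t$ must be nonzero (else $\delta'$ is ill-defined and $a,b$ blow up), the purely-imaginary case is compatible, but then $r - r^{-1} = (\delta'-1)t$ is purely imaginary times a real, hence... one must check the consistency with the $a, b$ values; I expect the purely-imaginary subcase either gets absorbed into one of the two listed cases after the substitution $q \mapsto iq$, or is ruled out by positivity of traces. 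Then since $r$ is given by a real-linear expression in $q - q^{-1}$ with real coefficients, $r - r^{-1}$ lies in $\mathbb{R}\cdot t$: if $q$ is real then $t$ is real so $r - r^{-1}$ is real so $r$ is real; if $|q| = 1$ then $t = q - q^{-1}$ is purely imaginary, so $r - r^{-1}$ is purely imaginary, which forces $|r| = 1$. This gives the two cases.

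Finally, for uniqueness under $\Re q \ge 0$, $\Im q \ge 0$: the value $N = q^2 + q^{-2}$ determines $q^2$ as a root of $z + z^{-1} = N$, i.e. $z^2 - Nz + 1 = 0$, which has two roots $z_\pm$ that are reciprocal (product $1$); and then $q$ is one of the four square roots $\pm\sqrt{z_+}, \pm\sqrt{z_-}$. The constraint $\Re q \ge 0$ halves this to two choices, say $q$ and $q^{-1}$ (the two square roots of reciprocal numbers with nonnegative real part are reciprocals of each other up to sign, and one checks they have opposite-signed imaginary parts), and $\Im q \ge 0$ picks exactly one. Once $q$ is fixed, $t = q - q^{-1}$ is fixed, and the linear formula determines $r$ uniquely. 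So $q, r$ are uniquely determined by $\delta', a, b$ under the stated sign conditions. The main obstacle I anticipate is purely algebraic bookkeeping: carrying out the elimination cleanly enough that the messy rational expression actually simplifies to the stated quotient, and correctly handling the degenerate loci (denominators $(\delta'-1)^4 - (b-a)^2 = 0$, $t = 0$, $\delta' = 1$) — these should be excludable because they contradict the hypothesis that a genuine solution $q, r$ exists with $\delta', a, b$ arising as honest traces in a subfactor planar algebra.
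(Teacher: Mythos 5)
Your proposal is essentially the paper's own proof: the paper likewise subtracts the equations for $a$ and $b$ to get $r+r^{-1}=\frac{(a-b)(q+q^{-1})}{\delta'-1}$, combines this with $r-r^{-1}=(\delta'-1)(q-q^{-1})$ via the identity $(r+r^{-1})^2=(r-r^{-1})^2+4$ to solve for $q^2+q^{-2}$ and then for $r$ as the half-sum, and concludes the reality dichotomy and the uniqueness under $\Re q\geq 0$, $\Im q\geq 0$ exactly as you outline. The purely-imaginary-$q$ subcase you flag is in fact also left unaddressed in the paper's proof (its step ``the right side is real, so either $|q|=1$ or $q,r$ are real'' has the same loophole), so your argument is no less complete than the published one.
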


\begin{align*}
q^2+q^{-2}&=\frac{2(1+\delta)^4-4(1+\delta)^2+2(b-a)^2}{(1+\delta)^4-(b-a)^2};\\
r&=\frac{-(1+\delta)^2(q-q^{-1})+(b-a)(q-q^{-1})}{2(1+\delta)}.
\end{align*}

\begin{proof}
By our assumptions, we have
\begin{align}
\frac{r-r^{-1}}{q-q^{-1}}&=\delta'-1; \label{equ1}\\
\frac{rq-r^{-1}q^{-1}}{q^{2}-q^{-2}}&=\frac{a}{\delta'-1}-1; \label{equ2}\\
\frac{rq^{-1}-r^{-1}q}{q^{2}-q^{-2}}&=\frac{b}{\delta'-1}-1. \label{equ3}
\end{align}
Equation $(\ref{equ1})$ and $(\ref{equ2})-(\ref{equ3})$ imply
\begin{align*}
r-r^{-1}&=(\delta'-1)(q-q^{-1});\\
r+r^{-1}&=\frac{(a-b)(q+q^{-1})}{\delta'-1}.
\end{align*}
Then
\begin{align*}
r&=\frac{(\delta'-1)^2(q-q^{-1})+(a-b)(q-q^{-1})}{2(\delta'-1)}.
\end{align*}
Note that $(r-r^{-1})^2+4=(r+r^{-1})^2$, so
\begin{align*}
(\delta'-1)(q-q^{-1}))^2+4&=(\frac{(a-b)(q+q^{-1})}{\delta'-1})^2.
\end{align*}
Then
\begin{align*}
q^2+q^{-2}&=\frac{2(1+\delta)^4-4(1+\delta)^2+2(b-a)^2}{(1+\delta)^4-(b-a)^2}.
\end{align*}
The right side is real, so either $|q|=1$ or $q,r$ are reals.
If $|q|=1$, then $r-r^{-1}=(\delta'-1)(q-q^{-1})$ is imaginary. So $|r|=1$.
If $q$ is real, then $r-r^{-1}=(\delta'-1)(q-q^{-1})$ is real. So $r$ is real.
Furthermore if $\Re{q}\geq0$, $\Im{q}\geq 0$, then $q$ is determined by the real number $q^2+q^{-2}$.
So $q,r$ are uniquely determined by $\delta',a,b$.
\end{proof}

\begin{lemma}
The four BMW planar algebras $\mathscr{C}(r,q)$, $\mathscr{C}(-r,-q)$, $\mathscr{C}(r^{-1},q^{-1})$, $\mathscr{C}(-r^{-1},q)$ are isomorphic.
\end{lemma}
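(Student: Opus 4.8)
The plan is to prove all four isomorphisms simultaneously by exhibiting, inside the single planar algebra $\mathscr{C}(r,q)$, three further self-contragredient bi-invertible Yang--Baxter generators, one for each of the remaining parameter pairs, and then invoking the presentation of BMW. Concretely, the three replacement generators will be $-U$, $U^{-1}$ and $-U^{-1}$.

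First I would record an ``eigenvalue dictionary'' for the generator. From $U=r^{-1}e+qP_1-q^{-1}P_2$, the element $U$ acts on the $3$-dimensional $2$-box space with eigenvalue $r^{-1}$ on the Jones projection $e$ and eigenvalues $q,-q^{-1}$ on the other two minimal idempotents; conversely the pair $(r,q)$ is recovered from these data as the inverse of the eigenvalue on $e$ together with the \emph{unordered} pair $\{q,-q^{-1}\}$. In particular the presentation is invariant under $q\mapsto -q^{-1}$, since this merely swaps $P_1$ and $P_2$, so $\mathscr{C}(r,q)=\mathscr{C}(r,-q^{-1})$. I would also note at this point that $\delta'=\frac{r-r^{-1}}{q-q^{-1}}+1$ is literally unchanged under $(r,q)\mapsto(-r,-q)$, $(r^{-1},q^{-1})$ and $(-r^{-1},q)$, which is the necessary consistency check (and is in fact forced once the isomorphisms exist), modulo the harmless sign convention of the Remark for $\delta'<0$.

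Next I would verify that $-U$, $U^{-1}$ and $-U^{-1}$ are again self-contragredient bi-invertible solutions of the Yang--Baxter equation: scaling by $-1$ and passing to the inverse braiding preserve the braid relation (multiply by $(-1)^3$, resp.\ take inverses of both sides), bi-invertibility, and self-contragredience. Each satisfies its own Reidemeister~I identities and cubic relation, which I would check by reading off eigenvalues and, exactly as in Lemma~\ref{unique braid}, by capping off to pin down the coefficient of $e$. The labels then come out as follows: $-U$ has eigenvalue $-r^{-1}$ on $e$ and $\{-q,q^{-1}\}=\{-q,-(-q)^{-1}\}$ elsewhere, so it is a BMW generator for $(-r,-q)$; $U^{-1}=U-(q-q^{-1})(id-e)$ has eigenvalue $r$ on $e$ and $\{q^{-1},-q\}$ elsewhere, so it is a BMW generator for $(r^{-1},q^{-1})$; and $-U^{-1}$ has eigenvalue $-r$ on $e$ and $\{q,-q^{-1}\}$ elsewhere, so, taking $q'=q$, it is a BMW generator for $(-r^{-1},q)$. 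Moreover each of $-U$, $U^{-1}$, $-U^{-1}$ lies in the planar subalgebra generated by $U$, while $U$ is $-1$ times, or the inverse of, each of them, so all four elements generate the \emph{same} planar algebra.

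Finally I would turn each of these into a genuine isomorphism by the standard universality argument: by the presentation of BMW, mapping the canonical generator of, say, $\mathscr{C}(-r,-q)$ to $-U\in\mathscr{C}(r,q)$ defines a surjective planar algebra $*$-homomorphism, and the map in the opposite direction sending $U$ to $-1$ times the canonical generator of $\mathscr{C}(-r,-q)$ is a two-sided inverse on generators, hence an inverse on the whole planar algebra; likewise using $U^{-1}$ for $\mathscr{C}(r^{-1},q^{-1})$ and $-U^{-1}$ for $\mathscr{C}(-r^{-1},q)$. The main obstacle — really the only thing needing care — is the bookkeeping in the previous paragraph: one must check that each of $-U$, $U^{-1}$, $-U^{-1}$ satisfies \emph{all} of the defining relations (both Reidemeister~I identities and the full cubic, not merely one), and that the symmetry $q\mapsto -q^{-1}$ is applied with the correct sign so that the four labels emerge exactly as $(r,q)$, $(-r,-q)$, $(r^{-1},q^{-1})$, $(-r^{-1},q)$ rather than, e.g., as $(-r^{-1},-q^{-1})$; once the eigenvalue dictionary is in place this is a short computation.
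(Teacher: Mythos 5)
Your proposal is correct and takes essentially the same route as the paper: the paper's proof simply exhibits $-U$, $U^{-1}$ and $-re-q^{-1}P_1+qP_2$ (which is exactly your $-U^{-1}$) inside $\mathscr{C}(r,q)$ as generators satisfying the defining relations of $\mathscr{C}(-r,-q)$, $\mathscr{C}(r^{-1},q^{-1})$ and $\mathscr{C}(-r^{-1},q)$ respectively. One minor slip worth fixing: the quadratic relation gives $U^{-1}=U-(q-q^{-1})(id-\delta' e)$ (the cup--cap diagram is $\delta'e$, not the Jones projection $e$), but your eigenvalue bookkeeping for $U^{-1}$ is nevertheless correct, so the argument is unaffected.
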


\begin{proof}
Take $U=r^{-1}e+q P_1 -q^{-1} P_2$ to be the generator of $\mathscr{C}(r,q)$. Then
$U_1=-U$, $U_2=U^{-1}$, $U_3=-re-q^{-1}P_1+q P_2$ are generators of $\mathscr{C}(-r,-q)$, $\mathscr{C}(r^{-1},q^{-1})$, $\mathscr{C}(-r^{-1},q)$ respectively.
\end{proof}

We are interested in the case that $|q|=|r|=1$ or $q,r\in\mathbb{R}$ which admits a natural involution. Up to an isomorphisms, in the former case we assume that $\Re{q}\geq0,\Im{q}\geq0,\Re{r}\geq0$; in the latter case we assume that $q\geq1, r\geq0$.
To obtain a subfactor planar algebra, its Markov trace will be positive semidefinite, and the quotient of the planar algebra by the kernel of the partition function is a subfactor planar algebra.

\begin{notation}
The proper quotient of BMW is denoted by $\pi(\mathscr{C}(r,q))$, see \cite{Wen90,JonPA} for details.
\end{notation}

When $|q|=1$, these subfactor planar algebras are classified in Table 1 in \cite{Wen90}.
It is worth mentioning that the second-to-last row in Table 1 should be excluded.
This is clarified in \cite{Row05}, and Theorem 3.8 in \cite{Row08}.
When $q\geq1$, the argument is similar to Corollary 5.6 in \cite{Wen90}. If $r\neq q^n$, then the structure trace is non-degenerate on BMW.

If $r^{-1}\leq {q}^{n-1}<r<q^{n}$, then by Theorem 5.5 in \cite{Wen90} the trace of the minimal idempotent corresponding to $[1^{n+2}]$ is negative, where $[1^{n+2}]$ is the Young diagram with $n+3$ rows and 1 column. So it is not a subfactor planar algebra.

If $r^{-1}\geq {q}^{-2n-1}>r>q^{-2n-3}$, then the trace of the minimal idempotent corresponding to $[n+2]$ is negative, where $[n+2]$ is the Young diagram with 1 row and $n+2$ columns. So it is not a subfactor planar algebra.

If $r=q^{-1}$, then $\delta=0$. So it is not a subfactor planar algebra.

When $r=q^{n-1}$ or $r=q^{-2n-1}$, for $n\geq 1$, these subfactor planar algebras are constructed from quantum $SO(n)$ and $Sp(2n)$ in \cite{Saw95}.

\begin{remark}
In general $\displaystyle q=e^{\frac{\pi i}{l}}$, $l=mk+h$, where $h$ is the dual Coxeter number, $k$ is the level and $m=1$,$2$ or $3$ depending on whether the Dynkin diagram is simply-laced or not.
\end{remark}

In general the dimension of 3-boxes of BMW is 15. For our purpose, we hope the dimension to be 14.
That means the trace of the minimal projection corresponding to the Young diagram $[3]$ or $[1^3]$ is 0.
Then we obtain two cases.

Case 1: $r=q^2$ and $\displaystyle q=e^{\frac{2\pi i}{7}}$. It arises from quantum $SO(3)$. Its principal graph is $\gra{pringra}.$

Case 2: $r=q^{-5}$ and $\displaystyle q=e^{\frac{\pi i}{l}}$, for $l$ even, $l\geq 12$, or $q\geq 1$. They arise from quantum $Sp(4)$. In this case the value of a closed circle $\delta'$ is negative. To obtain a positive value, we need to switch the Jones projection to its negative. Then we have
$$\gra{br1}=-r~\gra{br3};\quad\gra{br2}=-r^{-1}~\gra{br3};$$
$$\gra{br4}-\gra{br5}=(q-q^{-1})(\gra{br6}+\gra{br7});$$
and $\displaystyle \delta=-\delta'=-\frac{r-r^{-1}}{q-q^{-1}}-1=q^4+q^2+q^{-2}+q^{-4}$.

\begin{remark}
To derive the above formulas while switching the Jones projection, one way is considering the formulas as multiplication of generators; another way is considering the winding number.
\end{remark}



\section{Classification}\label{classification}
Suppose $\mathscr{S}$ is a subfactor planar algebra generated by a 2-box and $\dim(\mathscr{S}_{3,\pm})=14$, then $\dim(\mathscr{S}_{2,\pm})=3$, and $\mathscr{S}_{2,+}$, $\mathscr{S}_{2,-}$ are abelian. Suppose $P_1,P_2\in\mathscr{S}_{2,+}, Q_1,Q_2\in\mathscr{S}_{2,-}$ are distinct minimal projections orthogonal to the Jones projections, such that $tr(P_1)\leq tr(P_2)$ and $tr(Q_1)\leq tr(Q_2)$. Take $a=tr(P_1)$, $b=tr(P_2)$, and $a\leq b$, then $a+b=\delta^{2}-1$.
First we will show the unique possibility of the principal graph up to depth 3. Consequently the generator is self-contragredient, and $tr(P_i)=tr(Q_i)$, for $i=1,2$. Then we compute the coproduct of $\mathscr{S}_{2,+}$, which is determined by $a,b$. Furthermore $a,b$ are determined by $\delta$, by computing the chirality. When the chirality is 1, $\mathscr{S}$ is depth 3. When the chirality is $-1$, we construct the bi-invertible generator in $\mathscr{S}_{2,+}$ with its relation, which implies $\mathscr{S}$ is BMW.

\begin{lemma}\label{prin}
The (dual) principal graph of $\mathscr{S}$ up to depth 3 is
$$\graa{hat}.$$
\end{lemma}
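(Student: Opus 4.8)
The plan is to analyze the principal graph $\Gamma$ of $\mathscr{S}$ near the vertex $*$ (the trivial $N$-$N$ bimodule) using the constraints coming from $\dim(\mathscr{S}_{2,\pm}) = 3$ and $\dim(\mathscr{S}_{3,\pm}) = 14$, together with the biprojection obstruction. At depth 1 there is the single edge from $*$ to the vertex corresponding to $M$ (viewed as an $N$-$M$ bimodule), which is standard. At depth 2, the vertices correspond to irreducible $N$-$N$ subbimodules of $M \otimes_M \overline{M}$, i.e. to the minimal central projections of $\mathscr{S}_{2,+}$; since $\dim(\mathscr{S}_{2,+}) = 3$ and $\mathscr{S}_{2,+}$ is abelian, there are exactly three of them: the Jones projection $e$ (the trivial bimodule $*$) and two others, say corresponding to $P_1, P_2$ with $tr(P_1) = a \le tr(P_2) = b$. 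So the principal graph has exactly two vertices at depth $2$ besides the return to $*$. The key numerical identity $a + b = \delta^2 - 1$ comes from $tr(e) + a + b = tr(\mathrm{id}) = \delta^2$ (the full trace of the identity $2$-box).

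First I would pin down the edge multiplicities between depth-1 and depth-2 vertices. The vertex $M$ at depth 1 connects to $*$ (single edge) and to each depth-2 vertex; the number of edges from $M$ to the depth-2 vertex labelled by a minimal projection $Q$ is the multiplicity of $Q$ in the decomposition, which is controlled by $\dim(\mathscr{S}_{2,+})$; since the three minimal central projections are all distinct with multiplicity one (abelianness of $\mathscr{S}_{2,\pm}$), each is connected to $M$ by a single edge. This forces the depth $\le 2$ part of $\Gamma$ to be the star with center $M$ and three outer vertices $*, v_1, v_2$. Next I would determine depth 3. A vertex $w$ at depth 3 is an irreducible $M$-$N$ bimodule appearing in some $M \otimes_N (\text{depth-2 bimodule})$; the number of length-two paths between a depth-2 vertex $v_i$ (labelled $P_i$) and a depth-2 vertex $v_j$ (labelled $P_j$) through depth 3, added to the analogous count through depth 1, equals $\dim(P_j \mathscr{S}_{3,+} P_i)$, and by Lemma \ref{thickness} this bounds $r(\overline{P_i} * P_j)$; summing the dimensions $\dim(P_j\mathscr{S}_{3,+}P_i)$ over all pairs $i,j$ (including $*$) recovers $\dim(\mathscr{S}_{3,+}) = 14$. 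Using $\dim(e\,\mathscr{S}_{3,+}\,e) = 2$, $\dim(P_i\,\mathscr{S}_{3,+}\,e) = \dim(e\,\mathscr{S}_{3,+}\,P_i) = 1$ and the fact that $\dim(P_i \mathscr{S}_{3,+} P_j) \ge 1$ always (there is always at least one length-two path through $M$), a counting argument leaves only a small number of candidate graphs, which I would then eliminate using the biprojection obstruction.

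The decisive step uses Theorems \ref{P*P=P} and \ref{virtual normalizer}. If the principal graph up to depth 3 were one of the degenerate alternatives — for instance if one of the $P_i$ had the property that $P_i * P_i \preceq P_i$, or more generally if $P_2$ (the larger one) behaved like a virtual normalizer in the sense of Definition 4.21 — then $\mathscr{S}$ would either be Temperley–Lieb (impossible, since $\dim(\mathscr{S}_{2,+}) = 3 > 2$) or a free product of two nontrivial subfactor planar algebras; but a free product generated by a $2$-box is Fuss–Catalan, hence has $\dim(\mathscr{S}_{3,\pm}) \le 12 < 14$, a contradiction. Similarly, if $P_i * P_i \preceq P_i$ for some $i$, then $P_i$ is a biprojection by Theorem \ref{P*P=P}, and the planar algebra generated by a biprojection is again Fuss–Catalan with $\dim(\mathscr{S}_{3,\pm}) \le 12$, contradiction. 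Ruling these out forces the unique graph $\graa{hat}$ at depth $\le 3$, in particular forcing the depth-3 structure to be the specific three-valent picture shown. I expect the main obstacle to be the bookkeeping in the middle step: correctly enumerating the finitely many graphs consistent with the abelianness of $\mathscr{S}_{2,\pm}$, the dimension count $\sum_{i,j} \dim(P_j\mathscr{S}_{3,+}P_i) = 14$, and the rank bound of Lemma \ref{thickness}, and then matching each surviving candidate to a biprojection or virtual-normalizer configuration so that the Fuss–Catalan obstruction applies. Once the combinatorics is organized, each elimination is a short argument, and what remains is exactly the claimed graph.
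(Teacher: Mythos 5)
Your toolbox is the right one (Lemma \ref{thickness}, Theorem \ref{virtual normalizer}, the Fuss--Catalan dimension obstruction, and counting $\dim(\mathscr{S}_{3,+})$ via length-2 paths), and the depth $\le 2$ part of your argument agrees with the paper. But the decisive step --- actually eliminating the alternative depth-3 configurations --- is only promised (``I would then eliminate\dots'', ``I expect the main obstacle to be the bookkeeping''), and that elimination is the entire content of the lemma. The paper's mechanism is a single clean dichotomy: either the two depth-2 vertices have a common depth-3 neighbour or not. If not, the number of length-2 paths between them is exactly $1$ (only the path through the depth-1 vertex), so $\dim(P_2\mathscr{S}_{3,+}P_1)=1$ and Lemma \ref{thickness} gives $r(\overline{P_1}*P_2)=1$; then either $tr(P_2)=1$, forcing $a=b=1$ and $\delta^2=3$ (excluded), or $tr(P_2)>1$ and $P_2$ is a right virtual normalizer, so Theorem \ref{virtual normalizer} makes $\mathscr{S}$ Temperley--Lieb or a free product, both excluded by $\dim(\mathscr{S}_{3,\pm})=14$. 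If there is a common neighbour, the count $3^2+2^2=13$ forces exactly one further depth-3 vertex attached by a single edge, giving $14$ and the claimed graph. Note that this covers all your ``candidate graphs'' at once (five singly-attached depth-3 vertices, a doubly-attached one plus a single one, etc.), since every no-common-neighbour configuration has $\dim(P_2\mathscr{S}_{3,+}P_1)=1$.

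Where you do commit to specifics, two of them would not go through. First, $\dim(e\,\mathscr{S}_{3,+}\,e)=1$, not $2$ (there is exactly one length-2 path from $*$ to itself), so your bookkeeping $\sum_{i,j}\dim(P_j\mathscr{S}_{3,+}P_i)=14$ is off by one in each block. Second, the appeal to Theorem \ref{P*P=P} cannot be triggered from graph data at this stage: Lemma \ref{thickness} only gives an upper bound on the rank of $\overline{P_i}*P_j$, whereas $P_i*P_i\preceq P_i$ is a statement about which minimal projections actually occur in the coproduct; the paper uses Theorem \ref{P*P=P} only later, in Lemma \ref{copr}, after the graph and self-contragrediency (Corollary \ref{cor}) are established. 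Finally, your ``behaved like a virtual normalizer'' step needs two things you omit: the hypothesis $tr(P)>1$ (hence the separate $tr(P_2)=1$, $\delta^2=3$ case), and the check that $r(Q*P_2)=1$ for \emph{every} minimal $Q\neq\overline{P_2}$, which requires a short case analysis on whether $\overline{P_1}=P_1$ or $\overline{P_1}=P_2$ (both cases reduce to $r(\overline{P_1}*P_2)\le\dim(P_2\mathscr{S}_{3,+}P_1)=1$, together with $r(e*P_2)=1$). As written, the proposal identifies the ingredients but leaves the proof's actual argument unsupplied.
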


\begin{proof}
Since $\dim(\mathscr{S}_{2,\pm})=3$, there are two depth 2 vertices in the principal graph.
If there is no common depth 3 vertex adjacent to the two depth 2 vertices, then $\dim(P_2\mathscr{S}_{3,+}P_1)=1$.
By Lemma \ref{thickness}, we have $r(\overline{P_1}*P_2)=1$. If $tr(P_2)=1$, then $tr(P_1)=1$ and $\delta^2=3$, a contradiction. If $tr(P_2)>1$, then $P_2$ is a right virtual normalizer. By Theorem \ref{virtual normalizer}, $\mathscr{S}$ is Fuss-Catalan, a contradiction.
So there is a common depth 3 vertex adjacent to the two depth 2 vertices. By counting the dimension of $\mathscr{S}_{3,+}$, there is one more depth 3 vertex in the principal graph. So the principal graph up to depth 3 is as depicted above.
\end{proof}

\begin{corollary}\label{cor}
The projections $P_i, Q_i$ are self-contragredient, and $tr(P_i)=tr(Q_i)$, for $i=1,2$.
\end{corollary}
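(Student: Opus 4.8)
The plan is to read everything off Lemma~\ref{prin}. Throughout, write $a=tr(P_1)\le tr(P_2)=b$ and $a'=tr(Q_1)\le tr(Q_2)=b'$, so that the trace spectrum of $\mathscr{S}_{2,+}$ is $\{1,a,b\}$ and that of $\mathscr{S}_{2,-}$ is $\{1,a',b'\}$, with $a+b=\delta^{2}-1=a'+b'$ (the identity of $\mathscr{S}_{2,\pm}$ has trace $\delta^{2}$).

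\emph{Step 1: reduce self-contragredience to a strict trace inequality.} The contragredient $\overline{\,\cdot\,}=\mathcal{F}^{2}$ restricts to an injective, trace-preserving involution of the abelian algebra $\mathscr{S}_{2,+}$ that fixes the Jones projection $e$, hence it permutes $\{P_1,P_2\}$. Since it preserves traces, it can interchange $P_1$ and $P_2$ only if $a=b$. Thus $\overline{P_i}=P_i$ will follow once $a<b$ strictly, and similarly $\overline{Q_i}=Q_i$ will follow from $a'<b'$.

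\emph{Step 2: prove $a<b$ from the shape of the graph.} The unnormalized Markov trace, restricted to the vertices of the principal graph, is the dimension eigenvector for eigenvalue $\delta$: it assigns $1$ to $\ast$, $\delta$ to the depth-$1$ vertex, and $a,b$ to the two depth-$2$ vertices, and it satisfies $\delta\,tr(v)=\sum_{w\sim v}tr(w)$ at every vertex. By Lemma~\ref{prin} the graph up to depth $3$ has exactly two depth-$3$ vertices: one, $Z_1$, joined to both depth-$2$ vertices, and one, $Z_2$, joined to exactly one of them (forced by $3^{2}+2^{2}+1^{2}=14$). Writing the eigenvector relation at each depth-$2$ vertex and subtracting, the contributions of the depth-$1$ vertex and of $Z_1$ cancel, leaving $\delta(a-b)=\varepsilon\,tr(Z_2)$ with $\varepsilon\in\{+1,-1\}$ recording which depth-$2$ vertex carries $Z_2$. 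Since $tr(Z_2)>0$ and $a\le b$, necessarily $\varepsilon=-1$; hence $a<b$ and $Z_2$ hangs off the $b$-vertex. The same argument applied to the dual principal graph (also the graph of Lemma~\ref{prin}) gives $a'<b'$, with its distinguished depth-$3$ vertex $Z_2'$ hanging off the $b'$-vertex. Combined with Step 1, this proves that $P_1,P_2,Q_1,Q_2$ are all self-contragredient.

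\emph{Step 3: match the traces.} Since $a+b=a'+b'$ and both pairs are strictly ordered, it suffices to show $b-a=b'-a'$, equivalently $tr(Z_2)=tr(Z_2')$. For this I would use the standard duality between the principal and dual principal graphs: the contragredient is dimension-preserving and carries the odd-depth part of one graph bijectively onto the odd-depth part of the other, respecting adjacency with the depth-$2$ vertices; since $Z_2$ (resp.\ $Z_2'$) is the \emph{unique} depth-$3$ vertex adjacent to a single depth-$2$ vertex, it must send $Z_2$ to $\overline{Z_2}=Z_2'$, whence $tr(Z_2)=tr(\overline{Z_2})=tr(Z_2')$. Hence $a=a'$, $b=b'$, i.e.\ $tr(P_i)=tr(Q_i)$. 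The point I expect to require the most care is exactly this matching of the two distinguished depth-$3$ vertices across the graphs; a purely planar-algebraic substitute is to note, straight from Lemma~\ref{prin}, that $\dim(P_1\mathscr{S}_{3,+}P_1)=2$ while $\dim(P_2\mathscr{S}_{3,+}P_2)=3$, and likewise $\dim(Q_1\mathscr{S}_{3,-}Q_1)=2$, $\dim(Q_2\mathscr{S}_{3,-}Q_2)=3$, which already distinguishes $P_2$ and $Q_2$ intrinsically and, together with Lemma~\ref{thickness}, pins down the relevant coproduct ranks identically on both sides.
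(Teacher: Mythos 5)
Your argument is correct and is essentially the paper's own proof written out in detail: the paper likewise obtains the strict inequality $tr(P_2)>tr(P_1)$ from the hat-shaped graph (the singly-attached depth-3 vertex gives $\delta(b-a)=tr(Z_2)>0$), invokes the trace-preserving duality of odd-depth vertices between the principal and dual principal graphs to get $tr(P_i)=tr(Q_i)$, and then self-contragredience follows because $\mathcal{F}^2$ is a trace-preserving permutation of $\{P_1,P_2\}$ (resp.\ $\{Q_1,Q_2\}$) fixing $e$. The only phrase to tighten is ``respecting adjacency with the depth-2 vertices'' in your Step 3 (the depth-2 vertices of the two graphs are not yet identified at that stage): what the duality genuinely preserves is the number of length-3 paths from the marked vertex, which sends $Z_2$ to $Z_2'$; and even without matching them individually, the multiset equality $\{tr(Z_1),tr(Z_2)\}=\{tr(Z_1'),tr(Z_2')\}$ combined with $a+b=a'+b'=\delta^{2}-1$ already forces $tr(Z_2)=tr(Z_2')$, so your conclusion stands.
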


\begin{proof}
Considering the duality of odd vertices between the principal graph and the dual principal graph, we have $\delta tr(Q_2)=\delta tr(P_2)>\delta tr(P_1)=\delta tr(Q_1)$.
So $P_i, Q_i$ are self-contragredient, and $tr(Q_i)=tr(P_i)$, for $i=1,2$.
\end{proof}

\begin{lemma}\label{copr}
We have the following formulas for coproducts:
\begin{align*}
P_1*P_1&=\frac{a}{\delta}e + \frac{a^2-a}{\delta b}P_2;\\
P_1*P_2&=\frac{a-1}{\delta}P_1 + \frac{ab-a^2+a}{\delta b}P_2;\\
P_2*P_2&=\frac{b}{\delta}e+\frac{b-a+1}{\delta}P_1 + \frac{b^2-b-ab+a^2-a}{\delta b}P_2.\\
\end{align*}

Similar formulas hold for $Q_1$ and $Q_2$.
\end{lemma}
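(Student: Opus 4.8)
The plan is to compute each $P_i * P_j$ by expanding it in the orthogonal basis $\{e, P_1, P_2\}$ of minimal projections of $\mathscr{S}_{2,+}$ and recovering the coefficients from three traces. Writing $P_i * P_j = \alpha_{ij} e + \beta_{ij} P_1 + \gamma_{ij} P_2$, mutual orthogonality together with $tr(e)=1$, $tr(P_1)=a$, $tr(P_2)=b$ gives $\alpha_{ij} = tr((P_i*P_j)e)$, $\beta_{ij} = \frac1a tr((P_i*P_j)P_1)$, $\gamma_{ij} = \frac1b tr((P_i*P_j)P_2)$, so the whole computation reduces to the ``triangle'' numbers $\theta(x,y,z) := tr((x*y)z)$ for $x,y,z \in \{e,P_1,P_2\}$. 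I would first record two structural facts. One: $\theta$ is totally symmetric in its three arguments, because the coproduct is commutative ($\mathscr{S}_{2,+}$ is abelian), and $e, P_1, P_2$ are self-contragredient by Corollary \ref{cor}, so the underlying closed planar diagram is symmetric. Two: $\delta e$ is the unit for the coproduct and $tr(x*y) = \frac1\delta tr(x)tr(y)$. From $e*x = \frac1\delta x$ one then gets $\theta(e,x,y) = \frac1\delta tr(xy)$ at once, so $\alpha_{11} = a/\delta$, $\alpha_{12} = 0$, $\alpha_{22} = b/\delta$; and applying $tr(x*y) = \frac1\delta tr(x)tr(y)$ together with $id = e+P_1+P_2$ yields the three linear relations $\theta(P_1,P_1,P_1)+\theta(P_1,P_1,P_2) = \frac1\delta(a^2-a)$, $\theta(P_1,P_1,P_2)+\theta(P_1,P_2,P_2) = \frac1\delta ab$, $\theta(P_1,P_2,P_2)+\theta(P_2,P_2,P_2) = \frac1\delta(b^2-b)$ among the four remaining unknowns.

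The one additional relation needed is $\theta(P_1,P_1,P_1) = tr((P_1*P_1)P_1) = 0$, equivalently $\beta_{11}=0$, and this is where I would use the rank bound. By Lemma \ref{thickness} and $\overline{P_1}=P_1$ (Corollary \ref{cor}), $r(P_1*P_1) \le \dim(P_1\mathscr{S}_{3,+}P_1)$, which the principal graph of Lemma \ref{prin} identifies as the number of length-$2$ paths from the lower-trace depth-$2$ vertex to itself, namely $2$. Since $\alpha_{11}=a/\delta\ne 0$, this forces $\beta_{11}=0$ or $\gamma_{11}=0$, and I would rule out the latter as follows: if $\gamma_{11}=0$ then $P_1*P_1$ is supported under $B := e+P_1$, hence $B*B = \frac1\delta e + \frac2\delta P_1 + P_1*P_1$ is still a linear combination of $e$ and $P_1$, so $B*B \preceq B$ and $B$ is a biprojection by Theorem \ref{P*P=P}; it is proper because $P_1\ne 0$ and $P_2 = id - B \ne 0$. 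But the Fuss--Catalan planar algebra generated by $B$ has $2$-box space $\mathrm{span}\{id,e,B\} = \mathrm{span}\{e,P_1,P_2\} = \mathscr{S}_{2,+}$, and since $\mathscr{S}$ is generated by a single $2$-box, hence by $\mathscr{S}_{2,+}$, this would make $\mathscr{S}$ itself Fuss--Catalan, forcing $\dim(\mathscr{S}_{3,+})\le 12$ --- contradicting the hypothesis. Therefore $\beta_{11}=0$. Substituting $\theta(P_1,P_1,P_1)=0$ into the three relations solves for all $\theta(P_i,P_j,P_k)$, and the displayed formulas then drop out of $\alpha_{ij},\,\beta_{ij}=\theta(P_i,P_j,P_1)/a,\,\gamma_{ij}=\theta(P_i,P_j,P_2)/b$ using the symmetry $\theta(P_1,P_2,P_1)=\theta(P_1,P_1,P_2)$, $\theta(P_2,P_2,P_1)=\theta(P_1,P_2,P_2)$. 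The $Q_1,Q_2$ case is word for word the same with the opposite-shaded Jones projection and $tr(Q_i)=tr(P_i)$ from Corollary \ref{cor}.

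I expect the genuine obstacle to be precisely this last choice --- deciding which of the two subprojections is absent from $P_1*P_1$ --- since the rank estimate by itself is symmetric in $P_1$ and $P_2$. The symmetry has to be broken by the normalization $tr(P_1)\le tr(P_2)$, which fixes the shape of the ``hat'' at depth $3$ so that $\dim(P_1\mathscr{S}_{3,+}P_1)=2$ rather than $3$, combined with the Fuss--Catalan obstruction that kills the degenerate alternative. A secondary point requiring care is the diagrammatic justification of $tr(x*y)=\frac1\delta tr(x)tr(y)$ and of the total symmetry of $\theta$, but these are standard; everything past the determination of $\beta_{11}$ is routine arithmetic.
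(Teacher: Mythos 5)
Your proposal is correct and takes essentially the same route as the paper: the one non-formal step --- forcing $tr((P_1*P_1)P_1)=0$ by combining the rank bound of Lemma \ref{thickness} with the principal graph of Lemma \ref{prin}, then excluding $P_1*P_1\preceq e+P_1$ via Theorem \ref{P*P=P} and the Fuss--Catalan dimension obstruction --- is exactly the paper's argument. The remaining coefficients are obtained from the same isotopy/trace identities ($tr((x*y)e)$, $tr(x*y)=\frac{1}{\delta}tr(x)tr(y)$, Frobenius-type symmetry); packaging them as a symmetric linear system in $\theta(x,y,z)$ rather than computing each coproduct in turn is only an organizational difference.
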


\begin{proof}
By Lemma \ref{prin} and Lemma \ref{thickness}, we have $r(P_1*P_1)\leq 2$. If $P_1*P_1\preceq e+P_1$, then by Theorem \ref{P*P=P}, $e+P_1$ is a biprojection, and $\mathscr{S}$ is Fuss-Catalan, a contradiction.
Note that $e\preceq P_1*P_1$, so $P_1*P_1\sim e+P_2$.

The coefficient of $e$ in $P_1*P_1$ is $tr((P_1*P_1)e)$, and $\displaystyle tr((P_1*P_1)e)=\frac{a}{\delta}$ by isotopy.
Note that $\displaystyle tr(P_1*P_1)=\frac{a^2}{\delta}$, by computing the trace, we have
\begin{align*}
P_1*P_1&=\frac{a}{\delta}e + \frac{a^2-a}{\delta b}P_2.
\end{align*}

By isotopy, we have
\begin{align*}
tr((P_1*P_2)e)&=0, \\
tr((P_1*P_2)P_1)&=tr((P_1*P_1)P_2)=\frac{a^2-a}{\delta}.
\end{align*}
So the coefficients of $e$ and $P_1$ in $P_1*P_2$ are $0$ and $\displaystyle \frac{a-1}{\delta}$ respectively.
By computing the trace, we have
\begin{align*}
P_1*P_2=\frac{a-1}{\delta}P_1 + \frac{ab-a^2+a}{\delta b}P_2.
\end{align*}

Similarly we have
\begin{align*}
tr((P_2*P_2)e)&=\frac{b}{\delta},\\
tr((P_2*P_2)P_1)&=tr((P_1*P_2)P_2)=\frac{ab-a^2+a}{\delta},
\end{align*}
and
\begin{align*}
P_2*P_2&=\frac{b}{\delta}e+\frac{b-a+1}{\delta}P_1 + \frac{b^2-b-ab+a^2-a}{\delta b}P_2.
\end{align*}
\end{proof}

\begin{corollary}\label{unique hat}
There is a unique subfactor planar algebra whose principal graph is $\gra{pringra}$.
\end{corollary}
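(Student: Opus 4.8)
The plan is to read the corollary off the machinery already in place. The graph $\gra{pringra}$ pins down the numerical data $(\delta,a,b)$, Corollary \ref{cor} and Lemma \ref{copr} turn this into the full structure of $2$-boxes, and Lemma \ref{skein} upgrades an identification of $2$-box structures into an isomorphism of planar algebras. Existence is not where the work lies: the depth $3$ quantum $SO(3)$ planar algebra at $q=e^{2\pi i/7}$, i.e.\ the proper quotient $\pi(\mathscr{C}(q^2,q))$ of Case 1 in Section \ref{bmw14}, has principal graph $\gra{pringra}$. So the real point is uniqueness.

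So let $\mathscr{S}$ be any subfactor planar algebra with principal graph $\gra{pringra}$. The graph is finite of depth $3$, so $\delta$ is its graph norm; it has exactly two depth $2$ vertices, so $\dim(\mathscr{S}_{2,\pm})=3$; counting closed walks of length $6$ based at the distinguished vertex gives $\dim(\mathscr{S}_{3,\pm})=14$; and one checks, from the shape of $\gra{pringra}$, that $\mathscr{S}$ is generated by a $2$-box. Hence the entire analysis of Section \ref{classification} applies: there are distinguished minimal projections $e,P_1,P_2\in\mathscr{S}_{2,+}$, $e,Q_1,Q_2\in\mathscr{S}_{2,-}$, ordered so that $a=tr(P_1)\le tr(P_2)=b$. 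The Perron--Frobenius eigenvector of $\gra{pringra}$, normalized at the distinguished vertex, computes the traces of the two depth $2$ vertices, so $a$ and $b$, and therefore all of $(\delta,a,b)$, are determined by the graph alone.

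Now I would assemble the structure of $2$-boxes. The products on $\mathscr{S}_{2,\pm}$ are forced, since $e,P_1,P_2$ (resp.\ $e,Q_1,Q_2$) are mutually orthogonal minimal projections; all six are self-adjoint; by Corollary \ref{cor} each is self-contragredient; by Lemma \ref{copr} the coproducts $P_i*P_j$ and $Q_i*Q_j$ are explicit rational functions of $(\delta,a,b)$, while coproducts involving $e$ are fixed because $\delta e$ is the coproduct unit. As the four operations are (bi)linear and none of them mixes the two shadings, this determines the whole structure of $2$-boxes of $\mathscr{S}$ from $\gra{pringra}$. Given a second such planar algebra $\mathscr{S}'$, the linear map $\mathscr{S}_2\to\mathscr{S}'_2$ sending $e\mapsto e'$, $P_i\mapsto P_i'$, $Q_i\mapsto Q_i'$ is surjective and preserves adjoints, contragredients, products and coproducts, so by Lemma \ref{skein} it extends to a planar algebra isomorphism $\mathscr{S}\cong\mathscr{S}'$. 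Together with existence, this gives the corollary.

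The main obstacle I anticipate is the bookkeeping of the previous paragraph's first sentence: one must genuinely verify that a subfactor planar algebra with principal graph $\gra{pringra}$ is generated by a single $2$-box and has $\dim(\mathscr{S}_{3,\pm})=14$, since Lemma \ref{prin}, Corollary \ref{cor}, Lemma \ref{copr} and Lemma \ref{skein} are all stated under those hypotheses. Once that foothold is in place the argument is routine: the graph fixes $(\delta,a,b)$, these fix the structure of $2$-boxes, Lemma \ref{skein} finishes uniqueness, and the quantum $SO(3)$ construction supplies existence.
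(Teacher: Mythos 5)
Your argument is essentially the paper's own proof: existence comes from Case 1 of Section \ref{bmw14} (quantum $SO(3)$), and uniqueness comes from the graph fixing $(\delta,a,b)$, Corollary \ref{cor} and Lemma \ref{copr} fixing the structure of $2$-boxes, and Lemma \ref{skein} promoting this to a planar algebra isomorphism. The hypothesis-check you flag (that such a planar algebra has $\dim(\mathscr{S}_{3,\pm})=14$ and is generated by a $2$-box, so that those lemmas apply) is a fair point which the paper leaves implicit as well; it can be handled, e.g., by running the argument on the planar subalgebra generated by $\mathscr{S}_2$ and comparing loop counts on the graph, so it does not change the verdict that your route coincides with the paper's.
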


\begin{proof}
The existence follows from case 1 in Section \ref{bmw14}.
If a subfactor has this principal graph, then $\delta,a,b$ are fixed, and the structure of 2-boxes are derived from Corollary \ref{cor} and Lemma \ref{copr}.
Its uniqueness follows from Lemma \ref{skein}.
\end{proof}

This result was also proved by S. Morrison and E. Peters in \cite{MorPet12}.

In Section \ref{skein theory}, we showed that the two triangles in $\mathscr{S}_{3,+}$ have to reduce to a linear combination of diagrams with at most 2 vertices. The following Lemma tells how to reduce the two triangles.

\begin{lemma}\label{ppp0}
\begin{align*}
\grb{ppp0}=0.
\end{align*}
A similar formula holds for $Q_1$.
\end{lemma}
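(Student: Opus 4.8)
The plan is to evaluate the triangular diagram $\grb{ppp0}$ using only the structure of $2$-boxes, via the coproduct formulas of Lemma~\ref{copr}. The crucial input is that $P_1*P_1$ has no $P_1$-component: by the first part of the proof of Lemma~\ref{copr}, $P_1*P_1\sim e+P_2$, and explicitly $P_1*P_1=\frac{a}{\delta}e+\frac{a^2-a}{\delta b}P_2\in\mathrm{span}\{e,P_2\}$. Since $\mathscr{S}_{2,+}$ is abelian with mutually orthogonal minimal projections $e,P_1,P_2$, we get $P_1\cdot(P_1*P_1)=\frac{a}{\delta}eP_1+\frac{a^2-a}{\delta b}P_2P_1=0$. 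The strategy is to exhibit $\grb{ppp0}$, after a planar isotopy, as this product: place one of the three $P_1$-labels on top, so that the other two $P_1$'s joined by their single internal string form (a rotation of) the coproduct $P_1*P_1$, which is then composed with the top $P_1$ by the ($2$-box) product; the computation above forces $\grb{ppp0}=0$. The statement for $Q_1$ is identical, using the coproduct formulas for $\mathscr{S}_{2,-}$ together with Corollary~\ref{cor}.

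To certify the identity without fussing over diagram conventions, I would alternatively argue by nondegeneracy of the trace. Since $\grb{ppp0}\in\mathscr{S}_{3,+}$ and the $14$ diagrams of Section~\ref{skein theory} form a basis of $\mathscr{S}_{3,+}$ whose Gram matrix for $\langle x,y\rangle=tr(x^*y)$ is positive definite, it suffices to check that $tr\bigl(\grb{ppp0}^{*}\,x\bigr)=0$ for each of those $14$ basis diagrams $x$. Each such quantity is a closed diagram on at most $5$ vertices (using $P_1^{*}=\overline{P_1}=P_1$), hence by the argument of Lemma~\ref{reducetriangle} it is a scalar determined by $\delta,a,b$ and the products and coproducts of Lemma~\ref{copr}; one then verifies these scalars all vanish.

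I expect the main obstacle to be the first reduction: identifying precisely which two-vertex sub-configuration of the triangle equals the coproduct $P_1*P_1$ (as opposed to the product $P_1P_1=P_1$, or a rotation), and checking that the remaining $P_1$ acts on it by the product in $\mathscr{S}_{2,+}$ rather than by a coproduct, since a coproduct $P_1*(\,\cdot\,)$ against an element of $\mathrm{span}\{e,P_2\}$ need not vanish. If the isotopy resists this clean description, the trace computation above is the safe fallback, where the only real work is the bookkeeping of a handful of small closed diagrams, all reducible to scalars by the structure of $2$-boxes.
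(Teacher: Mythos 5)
Your first reduction does not work, and the reason is exactly the one you suspected: the triangle is an element of $\mathscr{S}_{3,+}$, while $P_1\cdot(P_1*P_1)$ is a $2$-box. In the coproduct $P_1*P_1$ the two boxes are joined along \emph{two} strands (it is the rotation of the product, $\mathcal{F}(ab)=\mathcal{F}(a)*\mathcal{F}(b)$), whereas in the triangle each pair of $P_1$'s is joined by a \emph{single} strand; two $2$-boxes joined by one strand form a $3$-box, not a coproduct. So no planar isotopy exhibits the triangle as the product of the third $P_1$ with $P_1*P_1$, and the identity $P_1\cdot(P_1*P_1)=0$ by itself does not kill the diagram. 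Your fallback is logically sound --- the trace is positive definite, the $14$ diagrams are a basis, so orthogonality to all of them forces the element to vanish, and each pairing is a closed diagram with at most $5$ vertices, hence evaluable from the $2$-box structure --- but the entire content of the lemma is then hidden in the sentence ``one then verifies these scalars all vanish,'' which you do not carry out; those $14$ evaluations (especially against the two-vertex diagrams) are genuine computations whose term-by-term vanishing is not evident without doing them.

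The paper's proof closes this gap with a single number rather than fourteen. Writing $x$ for the triangle, one uses $\overline{P_1}=P_1$, $P_1^*=P_1=P_1^2$ and an isotopy to identify
\begin{align*}
tr_3(x^*x)=tr\bigl((P_1*P_1)P_1\bigr),
\end{align*}
the fully closed triangle in which each pair of boxes is joined by two strands. By Lemma \ref{copr}, $P_1*P_1\in\mathrm{span}\{e,P_2\}$ is orthogonal to $P_1$, so this trace is $0$, and positivity (faithfulness) of the trace gives $x=0$; the $Q_1$ case is identical using Corollary \ref{cor}. You had both ingredients in hand --- the fact that $P_1*P_1$ has no $P_1$-component, and positivity of the trace --- but never combined them through $tr_3(x^*x)$, which is the missing step that turns your observation into a proof without the deferred case-by-case verification.
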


\begin{proof}
Let $x$ be the diagram in the statement. Note that $\overline{P_1}=P_1$ and $P_1$ is a projection, by isotopy we have $tr_3(x^*x)=tr((P_1*P_1)P_1)$. By Lemma \ref{copr} $tr((P_1*P_1)P_1)=0$, so $tr_3(x^*x)=0$. By positivity of the trace, we have $x=0$.
\end{proof}

\begin{theorem}\label{uniqueness}
Suppose $\mathscr{S}$ is a subfactor planar algebra generated by a 2-box subject to the condition $\dim(\mathscr{S}_{3,\pm})=14$. Then $\mathscr{S}$ is uniquely determined by $\delta$. If $\mathscr{S}$ is not the depth 3 one, then
$\mathcal{F}(bP_1-aP_2)=-(bQ_1-aQ_2)$, and
\begin{align*}
\frac{b}{a}=\frac{\delta-3+(\delta-1)\sqrt{4\delta+9}}{2\delta}.
\end{align*}
\end{theorem}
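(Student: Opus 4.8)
The plan is to apply Lemma \ref{skein}, which reduces the whole problem to the structure of 2-boxes. By Corollary \ref{cor} and Lemma \ref{copr} that structure is already determined by the single ratio $b/a$ together with $\delta$ (via $a+b=\delta^2-1$): the algebras $\mathscr S_{2,\pm}$ are copies of $\mathbb C^3$ with bases $e,P_1,P_2$ and $e',Q_1,Q_2$, all self-adjoint and self-contragredient, and the coproducts are the ones written in Lemma \ref{copr}. So it suffices to show $b/a$ is a function of $\delta$, with the depth-$3$ case split off. The missing relation among $a,b,\delta$ will come from the compatibility of the Fourier transform $\mathcal F\colon\mathscr S_{2,+}\to\mathscr S_{2,-}$ with products and coproducts, $\mathcal F(xy)=\mathcal F(x)*\mathcal F(y)$, together with the facts that $\mathcal F$ is an isometry and that $\mathcal F^2$, being the contragredient, is the identity on our projections.

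First I would pin $\mathcal F$ down up to one sign. In the standard normalization $\mathcal F(e)=\frac1\delta\,id'=\frac1\delta(e'+Q_1+Q_2)$ and $\mathcal F(id)=\delta e'$. Since $P_i\perp e$ and $\mathcal F$ preserves the inner product, $\mathcal F(P_i)\perp\mathcal F(e)$, i.e. $tr(\mathcal F(P_i))=0$, so $\mathcal F(P_1)+\mathcal F(P_2)=\mathcal F(id)-\mathcal F(e)=(\delta-\tfrac1\delta)e'-\tfrac1\delta Q_1-\tfrac1\delta Q_2$. Now $w=bP_1-aP_2$ is self-adjoint, trace-zero and orthogonal to $e$, and one checks that $\mathcal F(w)$ is self-adjoint, trace-zero and orthogonal to $e'$ (using $\mathcal F^{-1}(e')=\frac1\delta id$ and $\overline{\mathcal F(w)}=\mathcal F(w)$). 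The space of such elements of $\mathscr S_{2,-}$ is the line $\mathbb C\,(bQ_1-aQ_2)$, so $\mathcal F(w)=\varepsilon(bQ_1-aQ_2)$, and since $\|w\|^2=ab(a+b)=\|bQ_1-aQ_2\|^2$ we get $\varepsilon^2=1$; this $\varepsilon\in\{\pm1\}$ is the chirality. Solving the two linear equations $\mathcal F(P_1)+\mathcal F(P_2)=(\delta-\tfrac1\delta)e'-\tfrac1\delta Q_1-\tfrac1\delta Q_2$ and $b\mathcal F(P_1)-a\mathcal F(P_2)=\varepsilon(bQ_1-aQ_2)$ then expresses $\mathcal F(P_1),\mathcal F(P_2)$ explicitly as combinations of $e',Q_1,Q_2$ with coefficients rational in $\varepsilon,a,b,\delta$ (for instance the coefficient of $e'$ in $\mathcal F(P_1)$ works out to $a/\delta$ after using $a+b=\delta^2-1$).

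Next I would impose the remaining content of $\mathcal F(xy)=\mathcal F(x)*\mathcal F(y)$: since $P_1$ is a projection, $\mathcal F(P_1)$ must be a coproduct-idempotent, $\mathcal F(P_1)*\mathcal F(P_1)=\mathcal F(P_1)$, equivalently $\mathcal F(P_1)*\mathcal F(P_2)=0$. Expanding the left side with the $Q$-versions of Lemma \ref{copr}, the coefficient of $e'$ just reproduces $\|\mathcal F(P_1)\|^2=a$ and gives nothing new, while the coefficients of $Q_1$ and $Q_2$ become polynomial identities in $a,b,\delta,\varepsilon$; after substituting $a+b=\delta^2-1$ and $\varepsilon^2=1$ they collapse to a single relation, linear in $\varepsilon$, which therefore determines $\varepsilon$ as a rational function of $a,b,\delta$ — setting this equal to $\pm1$ is the extra equation sought. (Equivalently one may just evaluate the chirality as the value of the closed diagram $tr\big(\mathcal F(bP_1-aP_2)(bQ_1-aQ_2)\big)/\|bP_1-aP_2\|^2$, which has few enough vertices to be computed from the structure of 2-boxes as in Section \ref{skein theory}.) For $\varepsilon=-1$ the resulting equation is a quadratic in $t=b/a$ with roots $\big((\delta-3)\pm(\delta-1)\sqrt{4\delta+9}\big)/(2\delta)$; the minus root is negative for $\delta>1$, hence excluded by $a,b>0$, leaving $b/a=\big(\delta-3+(\delta-1)\sqrt{4\delta+9}\big)/(2\delta)$ and $\mathcal F(bP_1-aP_2)=-(bQ_1-aQ_2)$, as claimed. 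For $\varepsilon=+1$ the equation is incompatible with $b/a\ge1$ and $\dim(\mathscr S_{3,\pm})=14$ except at the single value of $\delta$ of the depth-$3$ planar algebra already isolated in Corollary \ref{unique hat}. In either case $b/a$, hence the whole structure of 2-boxes, is a function of $\delta$, so Lemma \ref{skein} gives the uniqueness; the computation for $Q_1,Q_2$ is identical by symmetry.

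The main obstacle is the last step: carrying out the coproduct-idempotent (or closed-diagram) computation cleanly while keeping the conventions for $\mathcal F$, the Jones projection and the normalization of $tr$ exactly straight — a sign slip here is fatal — then factoring out the spurious root, and, most delicately, showing that the $\varepsilon=+1$ branch genuinely collapses onto the depth-$3$ case rather than producing an extra spurious one-parameter family.
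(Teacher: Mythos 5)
Your strategy is essentially the paper's: establish $\mathcal{F}(bP_1-aP_2)=\varepsilon(bQ_1-aQ_2)$ with $\varepsilon=\pm1$ by matching self-adjointness, uncappability and norms; turn the compatibility of $\mathcal{F}$ with product and coproduct into a polynomial relation among $a,b,\delta,\varepsilon$ (your requirement that $\mathcal{F}(P_1)$ be a coproduct idempotent is just a repackaging of the paper's computation of $(bP_1-aP_2)*(bP_1-aP_2)$ once via Lemma \ref{copr} and once via $\mathcal{F}$); for $\varepsilon=-1$ obtain the quadratic in $y=b/a$ whose positive root is the stated value; and conclude uniqueness from Lemma \ref{skein}. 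All of that is sound and matches the paper.

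The genuine gap is the step you yourself flag as delicate: the elimination of the $\varepsilon=+1$ branch. Your assertion that in that case the equation is ``incompatible with $b/a\ge 1$ and $\dim(\mathscr{S}_{3,\pm})=14$ except at the single value of $\delta$ of the depth-$3$ planar algebra'' is false at the level of the 2-box relations alone: the $\varepsilon=+1$ relation is $\delta y^2-(\delta+3)y+\delta^2-2=0$ (with $a+b=\delta^2-1$), and it admits perfectly admissible solutions with $a,b>0$ and $y\ge 1$ on a whole interval of $\delta$, e.g.\ $\delta=2$, $y=2$, $(a,b)=(1,2)$. So the coproduct computation by itself does not collapse this branch to one value of $\delta$; an input of a different nature is needed, and this is exactly what the paper supplies. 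Namely, reality of $y$ forces the discriminant $(9-4\delta)(\delta+1)^2\ge 0$, i.e.\ $\delta\le \frac94$ and hence $\delta^2\le 5.0625$; on the other hand, if $\mathscr{S}$ were not of depth $3$, its principal graph would properly extend the depth-$3$ truncation of Lemma \ref{prin}, and the two minimal ways of adding a depth-$4$ vertex give graph norms about $5.18$ and $5.44$ (further extensions only increase the norm), contradicting $\delta^2\le 5.0625$. Thus $\varepsilon=+1$ forces depth $3$, and Corollary \ref{unique hat} finishes that case. Without an argument of this kind (index bound from the discriminant versus a graph-norm lower bound for non--depth-$3$ principal graphs), your proof leaves open precisely the ``spurious one-parameter family'' on the $\varepsilon=+1$ branch that you acknowledge, so the dichotomy claimed in the theorem is not yet established.
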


\begin{proof}
By Corollary \ref{unique hat}, we may assume that $\mathscr{S}$ is not the depth 3 subfactor planar algebra.
Note that $bP_1-aP_2$, $bQ_1-aQ_2$ are uncappable, self-adjoint, self-contragredient, and they have the same 2-norm $\sqrt{ab^2-a^2b}$, so
\begin{align*}
\mathcal{F}(bP_1-aP_2)&=\pm(bQ_1-aQ_2).
\end{align*}
Moreover,
\begin{align*}
&\mathcal{F}((bP_1-aP_2)*(bP_1-aP_2))\\
=&(bQ_1-aQ_2)^2\\
=&ab(Q_1+Q_2)+(b-a)(bQ_1-aQ_2).
\end{align*}

If $\mathcal{F}(bP_1-aP_2)=bQ_1-aQ_2$, then
$$(bP_1-aP_2)*(bP_1-aP_2)=ab(\delta e-\delta^{-1}id)+(b-a)(bP_1-aP_2).$$
Applying Lemma \ref{copr} and comparing the coefficients of $(bP_1-aP_2)$, we have
\begin{align*}
3ab-a^2(a+b-1)&=\delta b(b-a).
\end{align*}
Replacing $a+b$ by $\delta^2-1$, $\displaystyle \frac{b}{a}$ by $y$, we have
\begin{align}\label{equ 1}
\delta y^2-(\delta+3)y+\delta^2-2&=0.
\end{align}
So
\begin{align*}
0&\leq (\delta+3)^2-4\delta(\delta^2-2)=(9-4\delta)(\delta+1)^2.
\end{align*}
Then $\delta\leq \frac{9}{4}$.
There are two possible ways to add one depth 4 vertex and one edge in the principal graph. Their graph norms are about 5.18 and 5.44. While adding more vertices and edges, the graph norm will increase.
But $\delta^2\leq(\frac{9}{4})^2=5.0625<5.18$. So $\mathscr{S}$ is the unique depth 3 one.

If $\mathcal{F}(bP_1-aP_2)=-(bQ_1-aQ_2)$, then
$$(bP_1-aP_2)*(bP_1-aP_2)=ab(\delta e-\delta^{-1}id)-(b-a)(bP_1-aP_2).$$
Similarly we have
$$\delta y^2-(\delta-3)y-(\delta^2-2)=0.$$
Note that $\displaystyle y=\frac{b}{a}>0$, so
$$y=\frac{\delta-3+(\delta-1)\sqrt{4\delta+9}}{2\delta}.$$
Recall that $a+b=\delta^2-1$, so
\begin{align*}
a&=\frac{1}{y+1}(\delta^2-1),\\
b&=\frac{y}{y+1}(\delta^2-1).
\end{align*}
By Lemma (\ref{skein}) and (\ref{copr}), $\mathscr{S}$ is uniquely determined by $\delta$.
\end{proof}

\begin{remark}
If $\mathscr{S}$ is depth 3, then $b=\delta, a=\frac{\delta}{\delta-1},$ and $\delta$ is the largest root of $\delta^3-2\delta^2-\delta+1=0$. It follows from the above proof that $\mathcal{F}(bP_1-aP_2)=bQ_1-aQ_2$.
\end{remark}

If $\mathscr{S}$ is not the depth 3 one, then $\mathscr{S}$ is uniquely determined by $\delta$. When
\begin{align*}
\delta=q^4+q^2+q^{-2}+q^{-4},  ~q&=e^{\frac{\pi i}{l}},
\end{align*}
for $l$ even, $l\geq 12$, or $q\geq 1$, we know that such a subfactor planar algebra exists, namely BMW from quantum $Sp(4)$ \cite{Wen90}.
We cannot yet determine if the remaining one-parameter family are also BMW planar algebras from quantum $Sp(4)$, since we used positivity to derive the classification.
We expect to identify this one-parameter family as BMW from quantum $Sp(4)$. The idea is to find the generator of BMW in $\mathscr{S}_{2,+}$ satisfying the relations, Reidemeister moves I, II, III and the quadratic equation. Then this family is BMW. The generator of BMW in Lemma \ref{unique braid} is parameterised by $q$ and $r$. However, $\mathscr{S}_{2,+}$ is parameterised by $\delta$. We need to solve $q$ and $r$ in terms of $\delta$.

\begin{theorem}\label{BMW}
Suppose $\mathscr{S}$ is a subfactor planar algebra generated by a 2-box subject to the condition $\dim(\mathscr{S}_{3,\pm})=14$, then $\mathscr{S}$ is BMW.
More precisely, it is either the depth 3 one from quantum $SO(3)$, or it arises from quantum $Sp(4)$.
\end{theorem}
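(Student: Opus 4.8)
The plan is to dispose of the depth $3$ case with results already in hand, and then, in the remaining case, to exhibit a genuine BMW generator inside $\mathscr{S}_{2,+}$ and let the skein theory do the rest. The depth $3$ alternative is immediate: by Corollary \ref{unique hat} there is exactly one subfactor planar algebra with principal graph $\gra{pringra}$, namely the one from quantum $SO(3)$ (Case 1 of Section \ref{bmw14}), and by Theorem \ref{uniqueness} (and the remark following it) this is the only depth $3$ possibility for $\mathscr{S}$. So from now on assume $\mathscr{S}$ is not of depth $3$. Then Theorem \ref{uniqueness} says $\mathscr{S}$ is determined by $\delta$, with $a=tr(P_1)$, $b=tr(P_2)$ explicit functions of $\delta$ through $a+b=\delta^{2}-1$ and $b/a=\frac{\delta-3+(\delta-1)\sqrt{4\delta+9}}{2\delta}$, and with the contragredients and coproducts of $2$-boxes given by Corollary \ref{cor} and Lemma \ref{copr}.

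First I would take as candidate generator $U=c_0e+qP_1-q^{-1}P_2\in\mathscr{S}_{2,+}$, already written in its spectral decomposition. Capping $U$ forces $c_0$ to be the scalar prescribed by $r$ (with the sign dictated by $\delta'=-\delta<0$, as in the switched conventions of Section \ref{bmw14}), and requiring $U$ to satisfy the BMW quadratic relation is then literally the condition $c_i-c_i^{-1}=q-q^{-1}$ of Lemma \ref{unique braid}, which fixes the nontrivial eigenvalues as $q$ and $-q^{-1}$. It remains to pin down $q$ and $r$ in terms of $\delta$. Since for the BMW generator $tr(P_1)$ and $tr(P_2)$ are the explicit functions of $q,r$ recorded just before Lemma \ref{unique braid}, matching them with $a$ and $b$ and applying Lemma \ref{unique q r} expresses $q^{2}+q^{-2}$ and $r$ as functions of $\delta,a,b$, hence of $\delta$ alone. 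Feeding in the value of $b/a$ from Theorem \ref{uniqueness} and simplifying, I expect to land on $q^{2}+q^{-2}=\tfrac12\bigl(-1+\sqrt{9+4\delta}\bigr)$ and, crucially, $r=q^{-5}$, which is exactly the quantum $Sp(4)$ locus; together with Lemma \ref{unique q r} this also shows $|q|=|r|=1$ or $q,r$ real, so that $q$ sits in an admissible range.

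Next I would verify that $U$ satisfies the one remaining BMW relation. Reidemeister move I and the quadratic relation are built into the construction, so what is left is Reidemeister move III, i.e.\ the Yang--Baxter equation $U_1U_2U_1=U_2U_1U_2$ in $\mathscr{S}_{3,+}$. Both sides are particular elements of the $14$-dimensional space $\mathscr{S}_{3,+}$, and by Lemma \ref{reducetriangle} (together with Lemma \ref{ppp0} and the coproduct formulas of Lemma \ref{copr}) the coefficients of each side in the basis of $14$ diagrams are determined by the structure of $2$-boxes, hence are explicit functions of $\delta$; Yang--Baxter thereby becomes a finite list of polynomial identities in $\delta$ to check. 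Granting this, the assignment ``the BMW generator $\mapsto U$'' defines a planar algebra $*$-homomorphism $\mathscr{C}(q^{-5},q)\to\mathscr{S}$, which is onto because $q\neq-q^{-1}$ forces $U$ together with $e$ and $id$ to span the $3$-dimensional $\mathscr{S}_{2,+}$, which generates $\mathscr{S}$. Positive definiteness of the Markov trace of $\mathscr{S}$ makes this homomorphism kill the kernel of the partition function, so it descends to an isomorphism $\mathscr{S}\cong\pi(\mathscr{C}(q^{-5},q))$; since $\dim(\mathscr{S}_{3,\pm})=14$ and $\mathscr{S}$ is not depth $3$, Section \ref{bmw14} identifies the latter as the quantum $Sp(4)$ planar algebra with $q=e^{\pi i/l}$, $l$ even, $l\ge 12$, or $q\ge1$. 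As a bonus this settles the loose end noted after Theorem \ref{uniqueness}: the one-parameter family occurs only for these values of $\delta$.

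The principal difficulty is the Yang--Baxter check for $U$: although the skein theory (Lemmas \ref{reducetriangle} and \ref{ppp0}) guarantees that it reduces to a closed computation whose coefficients depend only on $2$-box data, actually carrying it out requires the explicit reduction of the triangles $\grs{15}$ and $\grs{16}$ and then the handling of the resulting polynomial identities, and the companion identity $r=q^{-5}$ for the solved parameters is a smaller instance of the same computation. One is tempted to bypass the bare-hands Yang--Baxter work by instead applying Lemma \ref{skein} to the target $\mathscr{A}=\pi(\mathscr{C}(q^{-5},q))$: once $\mathscr{A}$ is known to be a subfactor planar algebra generated by a $2$-box with $\dim(\mathscr{A}_{3,\pm})=14$ and not of depth $3$, Theorem \ref{uniqueness} applied to $\mathscr{A}$ forces its $2$-box structure to coincide with that of $\mathscr{S}$, so the evident linear isomorphism $\mathscr{S}_2\to\mathscr{A}_2$ matching minimal projections preserves the structure of $2$-boxes and extends to a planar algebra isomorphism. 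But this shortcut presupposes that the solved $q$ is admissible, equivalently that every admissible $\delta$ has the form $q^4+q^2+q^{-2}+q^{-4}$, which is precisely what the explicit construction of $U$ is needed to prove; so the Yang--Baxter verification cannot really be sidestepped.
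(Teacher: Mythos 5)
Your overall plan coincides with the paper's first proof (construct $U=r^{-1}(-e)+qP_1-q^{-1}P_2$ inside $\mathscr{S}_{2,+}$, check the BMW relations, conclude via the skein theory), but there is a genuine gap at exactly the point you flag and then pass over with ``Granting this'': nothing in your write-up actually verifies the Yang--Baxter equation for $U$, nor the companion identity $r=q^{-5}$ for the solved parameters. Reducing both sides to the $14$-diagram basis and observing that the coefficients are functions of $\delta$ (Lemmas \ref{reducetriangle}, \ref{ppp0}, \ref{copr}) only converts the problem into polynomial identities; it does not prove them, and you correctly note that the shortcut through Lemma \ref{skein} with target $\pi(\mathscr{C}(q^{-5},q))$ is circular because admissibility of $\delta$ is not known in advance. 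As it stands, your argument establishes the theorem only for those $\delta$ for which the quantum $Sp(4)$ planar algebra is already known to exist, i.e.\ it reproduces the uniqueness observation preceding Theorem \ref{BMW} rather than the theorem itself.

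The paper closes this gap not by the brute-force reduction you anticipate but by an interpolation argument, which is the missing idea. Set $x=\sqrt{4\delta+9}$; by Theorem \ref{uniqueness} the quantities $a,b,\delta$, hence the entire $2$-box structure and therefore the coefficients of the Yang--Baxter defect of $U$ (which Lemma \ref{ppp0} shows lies in the span of the $14$ diagrams with at most two vertices), are rational functions of $x$, equivalently of $\tilde q$. For the infinitely many values $\delta=q^4+q^2+q^{-2}+q^{-4}$ with $q\geq 1$, the subfactor planar algebra $\pi(\mathscr{C}(q^{-5},q))$ exists, has $14$-dimensional $3$-boxes and is not depth $3$, so by Theorem \ref{uniqueness} it \emph{is} $\mathscr{S}$ at those parameters, and by Lemma \ref{unique braid} its braid generator is exactly $U$; hence all the defect coefficients vanish at infinitely many values of $x$. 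Being rational functions, they vanish identically, so the Yang--Baxter equation (and likewise Reidemeister I, II, the quadratic relation, and the consistency $\tilde a=a$, $\tilde b=b$, $\tilde\delta=\delta'$ with $r=\tilde q^{-5}$) holds for \emph{every} $\delta$ in the classification family, which is what makes the conclusion ``$\mathscr{S}$ is BMW'' valid beyond the a priori known indices; positivity then places the parameter in the admissible $Sp(4)$ list of Section \ref{bmw14}. (Alternatively, the paper's second proof bypasses Yang--Baxter altogether: it uses $\mathcal{F}(bP_1-aP_2)=-(bQ_1-aQ_2)$ to build an explicit bi-invertible element and matches the full $2$-box structure with BMW, then invokes Lemma \ref{skein}.) Either of these devices, or an actual execution of your polynomial check, is needed to complete your proposal.
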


The two cases are listed at the end of Section \ref{bmw14}.

\begin{proof}
If $\mathscr{S}$ is depth 3, then by Lemma \ref{prin} and Corollary \ref{unique hat}, it is the depth 3 one from quantum $SO(3)$.
If $\mathscr{S}$ is not depth 3, then it is uniquely determined by $\delta$ and $\mathcal{F}(bP_1-aP_2)=-(bQ_1-aQ_2)$ by Theorem \ref{uniqueness}.
So $\mathscr{S}$ is $\pi(\mathscr{C}(q^{-5},q))$, when $\delta^2$ is the index of $\pi(\mathscr{C}(q^{-5},q))$, for $\displaystyle q=e^{\frac{\pi i}{l}}$, $l$ even, $l\geq 12$, or $q\geq 1$,

Recall that $a+b=\delta^{2}-1$, when $\delta>1$ we have $a>0,b>0$ and
$$\frac{2(\delta+1)^4-4(\delta+1)^2+2(b-a)^2}{(\delta+1)^4-(b-a)^2}>0$$
Take
\begin{align*}
\delta'&=-\delta;\\
r&=\tilde{q}^{-5};\\
\tilde{\delta}&=\frac{r-r^{-1}}{\tilde{q}-\tilde{q}^{-1}}+1;\\
\tilde{a}&=\frac{(r\tilde{q}-r^{-1}\tilde{q}^{-1}+\tilde{q}^{2}-\tilde{q}^{-2})(r-r^{-1})}{(\tilde{q}^{2}-\tilde{q}^{-2})(\tilde{q}-\tilde{q}^{-1})}; \\
\tilde{b}&=\frac{(r\tilde{q}^{-1}-r^{-1}\tilde{q}+\tilde{q}^{2}-\tilde{q}^{-2})(r-r^{-1})}{(\tilde{q}^{2}-\tilde{q}^{-2})(\tilde{q}-\tilde{q}^{-1})}.
\end{align*}
where $\tilde{q}\in\mathbb{C}$ is the solution of
$$\tilde{q}^2+\tilde{q}^{-2}=\frac{2(\delta'-1)^4-4(\delta'-1)^2+2(b-a)^2}{(\delta'-1)^4-(b-a)^2},$$
such that $\Re{\tilde{q}}\geq0$, $\Im{\tilde{q}}\geq0$.

When $\delta^2$ is the index of $\pi(\mathscr{C}(q^{-5},q))$, for $q\geq1$, we have $\tilde{q}=q, \tilde{\delta}=\delta'=-\delta, \tilde{a}=a, \tilde{b}=b$ by Lemma \ref{unique q r}.
Set $x=\sqrt{4\delta+9}$, then $\delta, a, b$ are rational functions of $x$, so $\delta', \tilde{q}^2+\tilde{q}^{-2}$ are rational functions of $x$.
Note that
\begin{align*}
\tilde{\delta}&=-\tilde{q}^4-\tilde{q}^{2}-\tilde{q}^{-2}-\tilde{q}^{-4};\\
\tilde{a}&=(-\tilde{q}^{2}-\tilde{q}^{-2}+1)(\tilde{\delta}-1);\\
\tilde{b}&=(-\tilde{q}^{4}-\tilde{q}^{-4})(\tilde{\delta}-1)
\end{align*}
are polynomials of $\tilde{q}^2+\tilde{q}^{-2}$, so they are rational functions of $x$.
Note that $\tilde{\delta},\tilde{a},\tilde{b}$ are identical to $\delta,a,b$ respectively at infinitely many values of $x$, so they are the same for any $x=\sqrt{4\delta+9}$, $\delta>1$. Therefore $\delta, a ,b$ are rational functions of $\tilde{q}$.

Take $U=r^{-1}(-e)+\tilde{q} P_1 -\tilde{q}^{-1} P_2$, then $U^{-1}=r(-e)+\tilde{q}^{-1} P_1 -\tilde{q} P_2$. When $\tilde{q}=q\geq1$,
by Lemma \ref{unique braid} $U$ is the generator of $\pi(\mathscr{C}(q^{-5},q))$ satisfying Reidemester move I, II, III and the quadratic equation, where the Reidemester move III is given by the Yang-Baxter equation
$$\grb{ybe1}=\grb{ybe2}.$$

For general $\tilde{q}$, by Lemma \ref{ppp0} the difference of the above two diagrams is a linear combination of 14 diagrams labeled by at most two $U$'s, and the 14 coefficients are rational functions of $\delta,a,b$. So they are rational functions of $\tilde{q}$. These rational functions are zeros at $\tilde{q}\geq1$, so they are zeros for any $\tilde{q}$. That means $U$ satisfies the Yang-Baxter equation. A similar argument works for Reidemester move I, II and the quadratic equation of $U$. Therefore $U$ is the generator of BMW and $\mathscr{S}$ is BMW.
\end{proof}

\begin{remark}
When $\delta\leq 4$, we have $q^2+q^{-2}\leq 2$, and $U$ is a bi-unitary.
\end{remark}

We give a second proof of Theorem \ref{BMW} without applying Lemma \ref{ppp0}. When $\mathscr{S}$ is not depth 3, we are going to show that the structure of 2-boxes of $\mathscr{S}$ is the same as BMW. Then by Lemma \ref{skein}, $\mathscr{S}$ is BMW.

\begin{proof}
When $\mathscr{S}$ is not depth 3, then it is uniquely determined by $\delta$ and $\mathcal{F}(bP_1-aP_2)=-(bQ_1-aQ_2)$ by Theorem \ref{uniqueness}.
Note that $a+b=\delta^{2}-1$, then
$$\frac{2(\delta+1)^4-4(\delta+1)^2+2(b-a)^2}{(\delta+1)^4-(b-a)^2}=\frac{(\delta b+a)^2+(\delta a+b)^2-(a+b)^2}{(\delta b+a)(\delta a+b)}.$$
Take $q$ to be the solution of
$$q^2+q^{-2}=\frac{(\delta b+a)^2+(\delta a+b)^2-(a+b)^2}{(\delta b+a)(\delta a+b)},$$
such that $\Re(q)\geq \cos(\frac{\pi}{4})$ and $\Im(q)\geq0$. Take
\begin{align*}
z_1&=\frac{(\delta b+a)q-(\delta a+b)q^{-1}}{a+b}; \\
z_2&=\frac{(\delta a+b)q-(\delta b+a)q^{-1}}{a+b}; \\
\mu_1&=\frac{\delta z_2-z_1}{a+b};\\
\mu_2&=\frac{\delta z_1-z_2}{a+b}; \\
\mu_3&=\frac{q+q^{-1}}{a+b};\\
U&=\mu_1id+\mu_2 \delta e+\mu_3(bP_1-aP_2).
\end{align*}
We will see $U$ is the bi-invertible generator of BMW.

Recall that $a+b=\delta^2-1$, so
\begin{align*}
\delta z_2-z_1&=aq-bq^{-1}; & \delta z_1-z_2&=bq-aq^{-1};\\
\mu_1+\mu_2\delta&=z_1; & \mu_1\delta+\mu_2&=z_2; \\
\mu_1+\mu_3b&=q; & \mu_1-\mu_3 a&=-q^{-1}; \\
\mu_2+\mu_3a&=q; & \mu_2-\mu_3 b&=-q^{-1}.
\end{align*}
Observe that
\begin{align*}
z_1+z_2&=\frac{(\delta+1)(a+b)(q-q^{-1})}{a+b}=(\delta+1)(q-q^{-1});\\
z_1z_2&=\frac{(\delta b+a)(\delta a+b)(q^2+q^{-2})-(\delta b+a)^2-(\delta a+b)^2}{(a+b)^2}=-1.
\end{align*}
So
\begin{align*}
z_i-z_i^{-1}&=(q-q^{-1})(\delta+1), \text{ for } i=1,2.
\end{align*}
Then
\begin{align*}
U&=(\mu_1+\mu_2 \delta) e+(\mu_1+\mu_3b)P_1+(\mu_1-\mu_3a)P_2\\
&=z_1 e+ q P_1 -q^{-1}P_2.
\end{align*}
So
\begin{align*}
U^{-1}&=z_1^{-1} e+ q^{-1}P_1 -qP_2,\\
U-U^{-1}&=(q-q^{-1})(id+\delta e).
\end{align*}
By Theorem \ref{uniqueness}, we have $\mathcal{F}(bP_1-aP_2)=-(bQ_1-aQ_2)$. So
\begin{align*}
U&=(\delta\mu_1+\mu_2 ) \delta^{-1}id+(\mu_2-\mu_3b)\mathcal{F}(Q_1)+(\mu_2+\mu_3a)\mathcal{F}(Q_2)\\
&=z_2\delta^{-1}id-q^{-1}\mathcal{F}(Q_1)+q\mathcal{F}(Q_2).
\end{align*}
Take
$$V=z_2^{-1}\delta^{-1}id-q\mathcal{F}(Q_1)+q^{-1}\mathcal{F}(Q_2),$$
then $U*V=\delta e$, and
$$U-V=(q-q^{-1})(id+\delta e).$$
Therefore $V=U^{-1}$, and $U$ is a bi-invertible.
Then the structure of 2-boxes of $\mathscr{S}$ is the same as that of BMW. So $\mathscr{S}$ is BMW by Lemma \ref{skein} . The ones with 14 dimensional 3-boxes are listed in Section \ref{bmw14}.
\end{proof}

  \bibliography{bibliography}

\providecommand{\bysame}{\leavevmode\hbox to3em{\hrulefill}\thinspace}
\providecommand{\MR}{\relax\ifhmode\unskip\space\fi MR }
\providecommand{\MRhref}[2]{%
  \href{http://www.ams.org/mathscinet-getitem?mr=#1}{#2}
}
\providecommand{\href}[2]{#2}
\begin{thebibliography}{GdlHJ89}

\bibitem[AH99]{AsaHaa}
M.~Asaeda and U.~Haagerup, \emph{Exotic subfactors of finite depth with jones
  indices $(5+\sqrt{13})/2$ and $(5+\sqrt{17})/2$}, Commun. Math. Phys.
  \textbf{202} (1999), 1--63.

\bibitem[BB01]{BelBla}
A.~Beliakova and C.~Blanchet, \emph{Skein construction of idempotents in
  {B}irman-{M}urakami-{W}enzl algebras}, Math. Ann. \textbf{321} (2001),
  347¨C373.

\bibitem[Bis94]{Bis94}
D.~Bisch, \emph{A note on intermediate subfactors}, Pacific J. Math.
  \textbf{163} (1994), 201--216.

\bibitem[Bis98]{Bis98}
\bysame, \emph{Principal graphs of subfactors with small {J}ones index}, Math.
  Ann. \textbf{311} (1998), no.~2, 223--231.

\bibitem[BJ97a]{BisJonFC}
D.~Bisch and V.~F.~R. Jones, \emph{Algebras associated to intermediate
  subfactors}, Invent. Math. \textbf{128} (1997), 89--157.

\bibitem[BJ97b]{BisJon97}
\bysame, \emph{Singly generated planar algebras of small dimension}, Duke Math.
  J. \textbf{128} (1997), 89--157.

\bibitem[BJ03]{BisJon02}
\bysame, \emph{Singly generated planar algebras of small dimension, part {II}},
  Advances in Mathematics \textbf{175} (2003), 297--318.

\bibitem[BMPS12]{BMPS}
S.~Bigelow, S.~Morrison, E.~Peters, and N.~Snyder, \emph{Constructing the
  extended {H}aagerup planar algebra}, Acta Math. (2012), 29--82.

\bibitem[BW89]{BirWen}
J.~Birman and H.~Wenzl, \emph{Braids, link polynomials and a new algebra},
  Trans. AMS \textbf{313(1)} (1989), 249--273.

\bibitem[GdlHJ89]{GHJ}
F.~Goodman, P.~de~la Harpe, and V.F.R. Jones, \emph{Coxeter graphs and towers
  of algebras}, vol.~14, Springer-Verlag, MSRI publications, 1989.

\bibitem[Haa94]{Haa94}
U.~Haagerup, \emph{Principal graphs of subfactors in the index range $4 <
  [{M}\colon {N}] < 3+\sqrt{2}$}, Subfactors (Kyuzeso,1993), World Sci. Publ.,
  River Edge, NJ, 1994, pp.~1--38.

\bibitem[IJMS12]{ind53}
M.~Izumi, V.~F.~R. Jones, S.~Morrison, and N.~Snyder, \emph{Subfactors of index
  less than 5, part 3: Quadruple points}, Comm. Math. Phys. \textbf{316(2)}
  (2012), 531--554.

\bibitem[Izu91]{Izu91}
M.~Izumi, \emph{Applications of fusion rules to classification of subfactors},
  Publ. RIMS, Kyoto Univ. \textbf{27} (1991), 953--994.

\bibitem[JMS14]{ind50}
V.~Jones, S.~Morrison, and N.~Snyder, \emph{The classification of subfactors of
  index at most 5}, Bulletin of the American Mathematical Society \textbf{51}
  (2014), no.~2, 277--327.

\bibitem[Jon]{JonPA}
V.~F.~R. Jones, \emph{Planar algebras, {I}}, arXiv:math.QA/9909027.

\bibitem[Jon83]{Jon83}
\bysame, \emph{Index for subfactors}, Invent. Math. \textbf{72} (1983), 1--25.

\bibitem[Jon12]{Jon12}
\bysame, \emph{Quadratic tangles in planar algebras}, Duke Math. J.
  \textbf{161} (2012), 2257--2295.

\bibitem[Kau90]{Kau90}
L.H. Kauffman, \emph{An invariant of regular isotopy}, Trans. AMS \textbf{318}
  (1990), 417--471.

\bibitem[Lan02]{Lan02}
Z.~Landau, \emph{Exchange relation planar algebras}, Geometriae Dedicata
  \textbf{95} (2002), 183--214.

\bibitem[Liu]{Liuex}
Z.~Liu, \emph{Exchange relation planar algebras of small rank},
  arXiv:1308.5656v2.

\bibitem[MP]{MorPet12}
S.~Morrison and E.~Peters, \emph{The little desert? some subfactors with index
  in the interval $(5;3 + \sqrt{5})$}, arXiv:1205.2742v1.

\bibitem[MPPS12]{ind52}
S.~Morrison, D.~Penneys, E.~Peters, and N.~Snyder, \emph{Subfactors of index
  less than 5, part 2: triple points}, International Journal of Mathematics
  \textbf{23(3)} (2012).

\bibitem[MPS10]{MPSD2n}
S.~Morrison, E.~Peters, and N.~Snyder, \emph{Skein theory for the ${D}_{2n}$
  planar algebras}, Journal of Pure and Applied Algebra \textbf{214} (2010),
  117--139.

\bibitem[MS12]{ind51}
S.~Morrison and N.~Snyder, \emph{Subfactors of index less than 5, part 1: the
  principal graph odometer}, Comm. Math. Phys. \textbf{117} (2012), 1--35.

\bibitem[Mur87]{Mur87}
J.~Murakami, \emph{The kauffman polynomial of links and representation theory},
  Osaka J. Math. \textbf{24(4)} (1987), 745--758.

\bibitem[Ocn88]{Ocn88}
A.~Ocneanu, \emph{Quantized groups, string algebras and {G}alois theory for
  algebras}, Operator algebras and applications, Vol.\ 2, London Math. Soc.
  Lecture Note Ser., vol. 136, Cambridge Univ. Press, Cambridge, 1988,
  pp.~119--172.

\bibitem[Pet10]{Pet10}
E.~Peters, \emph{A planar algebra construction of the {H}aagerup subfactor},
  International Journal of Mathematics \textbf{21} (2010), 987--1045.

\bibitem[Pop90]{Pop90}
S.~Popa, \emph{Classification of subfactors: reduction to commuting squares},
  Invent. Math. \textbf{101} (1990), 19--43.

\bibitem[Pop94]{Pop94}
\bysame, \emph{Classification of amenable subfactors of type {II}}, Acta Math.
  \textbf{172} (1994), 352--445.

\bibitem[Pop95]{Pop95}
\bysame, \emph{An axiomatization of the lattice of higher relative commutants},
  Invent. Math. \textbf{120} (1995), 237--252.

\bibitem[PT12]{ind54}
D.~Penneys and J.~Tener, \emph{Subfactors of index less than 5, part 4:
  Quadruple points}, International Journal of Mathematics \textbf{23(3)}
  (2012), 18 pages.

\bibitem[Row05]{Row05}
E.~Rowell, \emph{On a family of non-unitarizable ribbon categories}, Math. Z.
  \textbf{250} (2005), 745--774.

\bibitem[Row08]{Row08}
\bysame, \emph{Unitarizability of premodular categories}, J. Pure Appl. Algebra
  \textbf{212} (2008), 1878--1887.

\bibitem[Saw95]{Saw95}
S.~Sawin, \emph{Subfactors constructed from quantum groups}, Amer. J. Math.
  \textbf{117} (1995), 1349--1369.

\bibitem[SV93]{SunVij}
V.S. Sunder and A.K. Vijayarajan, \emph{On the nonoccurrence of the coxeter
  graphs $\beta_{2n+1}$, ${D}_{2n+1}$ and ${E}_7$ as the principal graph of an
  inclusion of {II}$_1$ factors}, Pacific J. Math. \textbf{161} (1993),
  185--200.

\bibitem[Wen90]{Wen90}
H.~Wenzl, \emph{Quantum groups and subfactors of type {B}, {C} and {D}}, Comm.
  Math. Phys \textbf{133} (1990), 383--433.

\end{thebibliography}
  \bibliographystyle{amsalpha}

\end{document}